\documentclass[11pt,reqno]{amsart}
\usepackage{indentfirst, amssymb, amsmath, amsthm, mathrsfs, setspace, indentfirst, enumerate,  mathrsfs, amsmath, amsthm}
\usepackage[bookmarksnumbered, colorlinks, plainpages]{hyperref}
\usepackage{mathrsfs}
\usepackage{cite}
\usepackage{tikz}
\usepackage{graphicx}
\usepackage{placeins}
\usetikzlibrary{arrows.meta,calc,decorations.pathreplacing,positioning,shadows.blur,backgrounds,calc,fit,
	decorations.pathreplacing}
\usetikzlibrary{shapes.geometric}
\usepackage{float}
\usepackage{pgfplots}
\usepackage{booktabs}
\usepackage{threeparttable} 
\usetikzlibrary{shadows}
\pgfplotsset{compat=1.18}
\usepackage[table]{xcolor}
\usepackage{array}
\usepackage{makecell}
\pgfplotsset{compat=1.18}
\usepackage{smartdiagram}
\newtheorem*{cor A}{Corollary A}
\newtheorem*{cor B}{Corollary B}

\newtheorem{theo}{Theorem}[section]
\newtheorem{lem}{Lemma}[section]
\newtheorem{cor}{Corollary}[section]

\newtheorem{defi}{Definition}[section]
\newtheorem{rem}{Remark}[section]
\newcommand{\ol}{\overline}
\newcommand{\be}{\begin{equation}}
	\newcommand{\ee}{\end{equation}}
\newcommand{\beas}{\begin{eqnarray*}}
	\newcommand{\eeas}{\end{eqnarray*}}
\newcommand{\bea}{\begin{eqnarray}}
	\newcommand{\eea}{\end{eqnarray}}

\numberwithin{equation}{section}

\begin{document}
\title[T\MakeLowercase {oeplitz determinants for starlike}......]{\LARGE T\Large\MakeLowercase {oeplitz determinants for starlike mappings of order $\alpha$ in several complex variables}}

\date{}
\author[S. P\MakeLowercase{anja}, S. M\MakeLowercase {ajumder} \MakeLowercase{and} A. B\MakeLowercase {anerjee}  ]{S\MakeLowercase{hantanu} P\MakeLowercase{anja}$^1$$^*$, S\MakeLowercase {ujoy} M\MakeLowercase {ajumder}$^2$ \MakeLowercase{and} A\MakeLowercase{bhijit} B\MakeLowercase {anerjee}$^3$}

\address{$^1$ Department of Mathematics, University of Kalyani, West Bengal 741235, India.}
\email{panjasantu07@gmail.com, shantanumath26@klyuniv.ac.in}

\address{$^{2}$ Department of Mathematics, Raiganj University, Raiganj, West Bengal-733134, India.}
\email{sm05math@gmail.com, sjm@raiganjuniversity.ac.in}

\address{$^{3}$ Department of Mathematics, University of Kalyani, West Bengal 741235, India.}
\email{abanerjee\_kal@yahoo.co.in, abhijitbanerjee@klyuniv.ac.in}

\renewcommand{\thefootnote}{}
\footnote{2020 \emph{Mathematics Subject Classification}: 32H02, 30C45.}
\footnote{\emph{Key words and phrases}: Biholomorphic mapping, Starlike mapping, Coefficient problems, Toeplitz determinant, Fr\'echet derivatives.}
\footnote{*\emph{Corresponding Author}: Shantanu Panja}

\renewcommand{\thefootnote}{\arabic{footnote}}
\setcounter{footnote}{0}

\begin{abstract}
	Let $F(z)=zf(z)$, where $f$ is a normalized scalar holomorphic function on the unit ball $\mathbb B$ of a complex Banach space $X$ or on the unit polydisk $\mathbb U^n$ of $\mathbb{C}^n$. We establish sharp bounds for two Toeplitz determinants involving the second, third and fourth Fr\'echet derivatives of $F$ when $F$ is starlike of order $\alpha$, $0\leq\alpha<1$. The first bound is sharp for the full range $0\leq\alpha<1$, while the second is sharp for $0\leq\alpha\leq 2/3$. The extremal mappings are explicitly identified, confirming the sharpness of the estimates.
\end{abstract}

\thanks{Typeset by \AmS -\LaTeX}
\maketitle

\section{\bf Introduction}

Toeplitz determinants are closely related to the coefficients of analytic and univalent functions. They provide nonlinear combinations of Taylor coefficients and therefore give a natural way to study the structure of
important subclasses of univalent functions. Such determinants have also appeared in several areas of mathematics; see, for example, the survey of
Ye--Lim \cite{Ye-Lim}. In geometric function theory, their
study is closely connected with sharp coefficient estimates for important subclasses of univalent functions. 

Let $\mathcal{S}(\mathbb{U})$ denote the class of analytic functions defined in the unit disk $\mathbb{U}=\{z\in\mathbb{C}:|z|<1\}$, normalized by the Taylor series expansion
\begin{align}\label{eq1}
	f(z)=z+\sum_{n=2}^{\infty}a_nz^n,\qquad z\in\mathbb{U}.
\end{align} 
\begin{defi} For $q\geq1$ and $n\geq 1$, the $q$th Toeplitz determinant $T_q(n)$ associated with $f$ is defined by
\begin{align*}
		T_q(n)(f)=
		\begin{vmatrix}
			a_{n} & a_{n+1} & \cdots & a_{n+q-1} \\
			a_{n+1} & a_{n} & \cdots & a_{n+q-2} \\
			\vdots & \vdots & \ddots & \vdots \\
			a_{n+q-1} & a_{n+q-2} & \cdots & a_{n}
		\end{vmatrix}.
	\end{align*}
	
\end{defi}

The two determinants considered in this paper are
\begin{align*}
T_2(3)(f)=a_3^2-a_4^2\quad\text{and}\quad T_3(2)(f)=(a_2-a_4)(a_2^2+a_2a_4-2a_3^2).
\end{align*}

Thus, the estimates of these determinants involve nonlinear
interactions among several consecutive Taylor coefficients.

For the class of starlike functions of order $\alpha$,
\begin{align*}
\mathcal S_{\alpha}^*(\mathbb{U})=
\left\lbrace f\in\mathcal{S}(\mathbb{U}):\Re\left\lbrace \frac{zf'(z)}{f(z)}\right\rbrace>\alpha
\right\rbrace,\qquad 0\leq\alpha<1.
\end{align*}
For $\alpha=0$, Ali et al. \cite{Ali et al.} established the sharp bounds
 \begin{align*}
 	|T_2(3)(f)|\le 25\quad\text{and}\quad|T_3(2)(f)|\le 84.
 \end{align*}
Ahuja et al.~\cite{Ahuja et al.} obtained the sharp estimates
\begin{align*}
	|T_2(2)(f)|
	\leq
	(1-\alpha)^2(13-12\alpha+4\alpha^2),
\end{align*}
 and
\begin{align*}
	|T_3(1)(f)|
	\leq
	12\alpha^4-52\alpha^3+91\alpha^2-74\alpha+24\quad\text{for}\quad 0\leq\alpha\leq\frac{2}{3}.
\end{align*} 
\par
We note that Ahuja et al. \cite{Ahuja et al.} estimates concern the different
functionals $T_2(2)(f)$ and $T_3(1)(f)$; sharp order-$\alpha$ bounds for $T_2(3)(f)$ and $T_3(2)(f)$ themselves do not appear to be available in the literature. These one-variable estimates provide a useful starting point for the higher-dimensional problem. However, the passage from one complex variable to several complex variables is not direct. In one variable, the expression $zf'(z)/f(z)$ gives a simple characterization of starlikeness. In a complex Banach space, the corresponding condition involves Fr\'echet derivatives and continuous linear functionals.
Moreover, the geometry of the domain and the choice of the norm enter the coefficient problem. Thus, the corresponding Toeplitz determinant problem requires a different treatment.

There has been considerable progress on coefficient estimates for normalized biholomorphic mappings in several complex variables. Important results in this direction were obtained by Bracci \cite{Bracci-2015}, Bracci and Roth
\cite{Bracci-2018}, Graham et al.~\cite{Graham-2002}, Hamada and Honda \cite{Hamada-Honda-2008}, Hamada et al.~\cite{Hamada-Honda-Kohr-2006} and Xu et al.~\cite{Xu-Liu-Liu-2018}. These works show that many questions from one-variable geometric function theory have meaningful higher-dimensional counterparts.

Toeplitz determinants have also recently been considered for holomorphic mappings in several variables. Giri--Kumar \cite{Giri-Kumar} obtained sharp bounds for $T_2(1)(f)$ and $T_3(1)(f)$ for suitable classes of holomorphic
mappings on the unit ball of a complex Banach space and on the unit polydisk. Xu et al.~\cite{Xu et al.} later considered $k$-fold symmetric holomorphic mappings. Other related estimates can be found in \cite{Giri, Xu+Jiang}.
\subsection*{Motivations}
The higher-dimensional behaviour of the second order Toeplitz
determinants $T_2(3)$ and $T_3(2)$ for normalized starlike mappings of order $\alpha$ has not been fully described in the settings considered here.
This leads to the main questions of the present paper: how do the sharp one-variable bounds extend to the unit ball of a complex Banach space and to the unit polydisk?

To make the higher-dimensional problem tractable while retaining sharpness, we work with the natural subclass of starlike mappings of order $\alpha$ obtained by radicalizing a scalar holomorphic function, namely $F(z)=zf(z)$ with $f(0)=1$; this construction is
standard in the study of extremal problems for starlike mappings in several variables (cf.\ \cite{Graham-Kohr}).

We answer these questions by obtaining sharp bounds for $T_2(3)$ and $T_3(2)$ for normalized starlike mappings of order $\alpha$ on the unit ball of a complex Banach space and on the unit polydisk in $\mathbb C^n$.
The results recover the corresponding one-variable estimates when the dimension is one. In particular, the results give higher-dimensional versions of the known one-variable estimates and show that the same coefficient combinations remain meaningful in the Banach space setting.

\section*{\bf Preliminaries}

Before proceeding further, let us establish some fundamental terminology that will recur throughout this work. Let $X$ denote a complex Banach space with norm $||\cdot||$. The ball of radius $r$ is denoted by $\mathbb{B}_r =\{z\in X:||z||<r\}$. When $r=1$, we will simply use $\mathbb{B}$ and if $X=\mathbb{C}$, then $\mathbb{B}=\mathbb{U}$. Let $\partial \mathbb{B}$ denote the boundary of $\mathbb{B}$, $\overline{\mathbb{B}}$ its closure, and $\partial_0 \mathbb{U}^n$ the characteristic boundary (i.e., the boundary on which the maximum modulus of a holomorphic function is attained) of the unit polydisk $\mathbb{U}^n$.

Now let $X$ and $Y$ be two complex Banach spaces with respect to the norms $||.||_1$ and $||.||_2$ respectively. For simplicity, we denote both norms by $||.||$, when there is no possibility of confusion. Let $L(X, Y)$ denote the set of continuous linear operators from a complex Banach space $X$ into a complex Banach space $Y$ with the standard operator norm 
\begin{align*} 
||A||=\sup\{||A(z)||: ||z||=1,\quad A\in L(X,Y)\}.
\end{align*} 

Let $I$ be the identity operator in $L(X, X)$. For each $z \in X \setminus \{0\}$, define the set  
\begin{align*}
	\displaystyle 
	T(z) = \{ l_z \in L(X, \mathbb{C}) : l_z(z) = \Vert{}z\Vert{}, \quad \Vert{}l_z\Vert{} = 1 \}.
\end{align*}

This set is guaranteed to be nonempty by the Hahn-Banach theorem. 
Let $\Omega$ be a domain in $X$ and let $f:\Omega \rightarrow Y$ be a mapping. Then $f$ is called holomorphic on $\Omega$ if for any $z\in\Omega$ there is a mapping $Df(z)\in L(X,Y)$, called the Fr\'echet derivative of $f$ at $z$ such that 
\begin{align*} \lim_{h\to 0} \frac{||f(z+h)-f(z)-Df(z)h||}{||h||}=0,\end{align*} i.e., 
\begin{align*} 
f(z+h)=f(z)+Df(z)h+o(||h||).
\end{align*}

Let $\mathcal{H}(\Omega, \Omega')$ represent the set of all holomorphic mappings from a domain $\Omega \subset X$ into another domain $\Omega' \subset Y$. We simply write $\mathcal{H}(\Omega, X) = \mathcal{H}(\Omega)$. It is well known that if $f \in \mathcal{H}(\Omega,Y)$, then for all $w$ in some neighborhood of $z \in \Omega$, the function can be expressed as the power series expansion:
\begin{align*}
\displaystyle
f(w) = \sum_{n=0}^{\infty} \frac{1}{n!} D^n f(z)((w - z)^n),
\end{align*}
where $D^n f(z)$ is a bounded symmetric $n$-linear mapping from $\prod_{j=1}^n X$ into $Y$, called the $n$-th Fr\'echet derivative of $f$ at $z$. Moreover, the term $D^n f(z)((w - z)^n)$ is shorthand for applying the $n$-linear mapping to $n$ copies of the vector $(w - z)$:
\begin{align*}
	\displaystyle
	D^n f(z)((w - z)^n) = D^n f(z)(\underbrace{w - z, w - z, \dots, w - z}_{n \text{ times}})
\end{align*}
where $D^0f(z)=f(z)$.\vspace{1.2mm}

A mapping $f \in \mathcal{H}(\mathbb{B})$ is said to be normalized if $f(0) = 0$ and $Df(0) = I$, where $I \in L(X,X)$ is the identity operator. It is called biholomorphic if $f$ is bijective with a holomorphic inverse $f^{-1}$ on $f(\mathbb{B})$ and locally biholomorphic if $Df(z)$ has a bounded inverse for every $z \in \mathbb{B}$. The family of all normalized biholomorphic mappings on $\mathbb{B}$ is denoted by $\mathcal{S}(\mathbb{B})$. \vspace{1.2mm}

A case of particular interest is that of the $p$-norm in
$\mathbb{C}^{n}$, that is,
\begin{align*}
	\displaystyle
	\|z\|_{p}=
	\begin{cases}
		\left(
		\displaystyle\sum_{j=1}^{n}|z_{j}|^{p}
		\right)^{1/p},
		& 1\le p<\infty, \\[2ex]
		\displaystyle\max_{1\le j\le n}|z_{j}|,
		& p=\infty.
	\end{cases}
\end{align*}

Let $\mathbb{B}(p)$ denote the unit ball with respect to $\|\cdot\|_{p}$.
When $p=\infty$, $\mathbb{B}(p)$ reduces to the unit polydisk $\mathbb{U}^n$ of $\mathbb{C}^{n}$. It is known that if $X=\mathbb{C}^{n}$ with respect to the maximum norm $\Vert{}.\Vert{}_\infty $, and $l_z\in T(z)$, then
\begin{align}\label{eq3}
	\displaystyle
	l_z(w)=
	\sum_{\{j:\|z\|_{\infty}=|z_j|\}}
	t_j\,
	\frac{w_j\overline{z_j}}
	{\|z\|_{\infty}},
\end{align}
where each $t_j\geq0$ and $\sum\limits_{\{j:\|z\|_{\infty}=|z_j|\}} t_j=1.$\vspace{1.2mm}

The following family plays a key role in our discussion:
\begin{align*}
	\displaystyle
	\mathcal{M}=\{f\in\mathcal{S}(\mathbb{B}),\; \Re\{l_z(f(z))\}>0,\; z\in\mathbb{B}\backslash\{0\},\; l_z\in T(z)\}.
\end{align*}

In the case of the maximum norm $\Vert{}.\Vert{}_\infty $ on $\mathbb{C}^n$, the class $\mathcal{M}$ consists of all normalized holomorphic mappings $w:\mathbb{U}^n \to \mathbb{C}^n$ satisfying
\begin{align*}
	\displaystyle
	\Re\left\lbrace \frac{w_j(z)}{z_j} \right\rbrace >0,
\end{align*}
whenever $\Vert{}z\Vert{}_\infty = \vert{}z_j\vert{} > 0$ for $1 \le j \le n$.
 
\section{{\bf Toeplitz determinants for starlike mappings of order $\alpha$ on a complex Banach space}}\label{Sec-2}
In this section, we recall the definition of starlikeness of order $\alpha$ in a complex Banach space.
\begin{defi}\emph{\cite[Definition 1.2]{Hamada-2001}}
	Let $f:\mathbb{B}\to X$ be a normalized locally biholomorphic mapping and let $\alpha\in [0,1)$. We say that $f$ is starlike of order $\alpha$ if 
	\begin{align}\label{SL1}
		\displaystyle
		\Re\left\lbrace \frac{||z||}{l_z([Df(z)]^{-1}f(z))}\right\rbrace>\alpha,\quad z\in\mathbb{B}\backslash\{0\},\quad l_z\in T(z).
	\end{align}
\end{defi}

Let $\mathcal{S}^*_{\alpha}(\mathbb{B})$ denote the class of starlike mappings of order $\alpha$ on $\mathbb{B}$. When $\alpha=0$, the class $\mathcal{S}^*_{\alpha}(\mathbb{B})$ reduces to the standard starlike class $\mathcal{S}^*(\mathbb{B})$.

 The theory of starlike mappings in higher dimensions has been extensively studied. For more information, see the recent monographs by Gong  \cite{Gong} and Kohr--Liczberski  \cite{Kohr- Liczberski-1998}.
 
 By considering biholomorphic mappings $F:\mathbb{B}\to X$ of the form $F(z)=f(z)z$ ($z\in\mathbb{B}$), where $f:\mathbb{B}\to \mathbb{C}$ is a holomorphic function with $f(0)=1$, we obtain the following results.
 
\begin{theo}\label{Th-1} Let $f\in\mathcal{H}(\mathbb{B},\mathbb{C})$ satisfy $f(0)=1$, and suppose that the mapping $F:\mathbb{B}\to X$ defined by 	$F(z)=f(z)z$ belongs to $\mathcal{S}_\alpha^*(\mathbb{B})$.
 Then, for every $z\in\mathbb{B}\setminus\{0\}$, we have
 \begin{align}\label{Th-C}
 \displaystyle
 \left|\left(\frac{l_{z}(D^4F(0)(z^4))}{4!\|z\|^4}\right)^2-\left(\frac{l_z(D^3F(0)(z^3))}{3!\|z\|^3}\right)^2\right|\le \frac{1}{9}(1-\alpha)^2(3-2\alpha)^2(25-16\alpha+4\alpha^2).
 	\end{align}
 	
 The inequality \eqref{Th-C} is sharp.
 \end{theo}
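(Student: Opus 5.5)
The plan is to reduce \eqref{Th-C} to a one-variable coefficient problem by restricting to complex lines through the origin.

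\textbf{Reduction to one variable.} Fix $z\in\mathbb{B}\setminus\{0\}$, put $u=z/\|z\|$ and take $l_z\in T(z)$; note that $l_z\in T(u)$ as well. Define
\begin{align*}
h(\zeta)=\zeta f(\zeta u),\qquad \zeta\in\mathbb{U}.
\end{align*}
For $F(w)=f(w)w$ one has
\begin{align*}
DF(w)=f(w)I+w\,Df(w)(\cdot).
\end{align*}
This operator has the inverse
\begin{align*}
v\mapsto \frac{1}{f(w)}\left(v-\frac{Df(w)v}{f(w)+Df(w)w}\,w\right),
\end{align*}
and therefore
\begin{align*}
[DF(w)]^{-1}F(w)=\frac{f(w)}{f(w)+Df(w)w}\,w.
\end{align*}
Take $w=\zeta u$ with $\zeta\neq0$. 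Since $T(w)=\{(\bar\zeta/|\zeta|)\,l: l\in T(u)\}$, condition \eqref{SL1} becomes
\begin{align*}
\Re\left\{1+\frac{Df(\zeta u)(\zeta u)}{f(\zeta u)}\right\}=\Re\left\{\frac{\zeta h'(\zeta)}{h(\zeta)}\right\}>\alpha .
\end{align*}
Hence $h\in\mathcal S^*_\alpha(\mathbb U)$.

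Next, differentiating $F(w)=f(w)w$ at $0$ gives
\begin{align*}
D^kF(0)(z^k)=k\,D^{k-1}f(0)(z^{k-1})\,z .
\end{align*}
Applying $l_z$ and using $l_z(z)=\|z\|$,
\begin{align*}
\frac{l_z(D^kF(0)(z^k))}{k!\|z\|^k}=\frac{D^{k-1}f(0)(u^{k-1})}{(k-1)!}=a_k,
\end{align*}
where $a_k$ is the $k$th Taylor coefficient of $h(\zeta)=\zeta+\sum_{k\ge2}a_k\zeta^k$. So the left side of \eqref{Th-C} equals $|a_4^2-a_3^2|=|T_2(3)(h)|$.

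\textbf{One-variable estimate.} By the triangle inequality, $|a_4^2-a_3^2|\le |a_3|^2+|a_4|^2$. We then invoke the classical sharp bounds for $\mathcal S^*_\alpha(\mathbb U)$,
\begin{align*}
|a_n|\le\prod_{k=2}^{n}\frac{k-2\alpha}{k-1}.
\end{align*}
These give
\begin{align*}
|a_3|\le(1-\alpha)(3-2\alpha),\qquad |a_4|\le\frac{2}{3}(1-\alpha)(3-2\alpha)(2-\alpha).
\end{align*}
If one prefers a self-contained argument, these follow from writing $\zeta h'/h=\alpha+(1-\alpha)p$ with $p$ Carathéodory, expressing $a_3,a_4$ through $c_1,c_2,c_3$, and using $|c_k|\le2$ together with the known estimates. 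Summing the squares gives
\begin{align*}
(1-\alpha)^2(3-2\alpha)^2\left(1+\frac{4(2-\alpha)^2}{9}\right)=\frac19(1-\alpha)^2(3-2\alpha)^2(25-16\alpha+4\alpha^2),
\end{align*}
which is exactly the bound in \eqref{Th-C}.

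\textbf{Sharpness.} Equality in the triangle inequality requires $a_3^2$ and $a_4^2$ to have opposite directions, with both coefficients at their maximal modulus. A rotation of the Koebe-type extremal function does this. Let $k(\zeta)=\zeta(1-\zeta)^{-2(1-\alpha)}$ have coefficients $A_n>0$. The rotation $e^{-i\theta}k(e^{i\theta}\zeta)$ has coefficients $e^{i(n-1)\theta}A_n$, so $a_3^2=e^{4i\theta}A_3^2$ and $a_4^2=e^{6i\theta}A_4^2$. Taking $e^{2i\theta}=-1$, i.e.\ $\theta=\pi/2$, gives $|a_4^2-a_3^2|=A_3^2+A_4^2$.

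To lift this to $\mathbb{B}$, fix a unit vector $u_0$ and some $l\in T(u_0)$, and set
\begin{align*}
f(w)=(1-i\,l(w))^{-2(1-\alpha)},\qquad F(w)=wf(w).
\end{align*}
This $f$ is holomorphic on $\mathbb{B}$ because $|l(w)|<1$ there. Writing $\omega=i\,l(w)$, we have
\begin{align*}
1+\frac{Df(w)w}{f(w)}=1+\frac{2(1-\alpha)\omega}{1-\omega}=\alpha+(1-\alpha)\frac{1+\omega}{1-\omega}.
\end{align*}
Since $|\omega|<1$, the real part of $(1+\omega)/(1-\omega)$ is positive, so this quantity has real part $>\alpha$. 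In particular it never vanishes, so $DF(w)$ is invertible and $F$ is locally biholomorphic. By the computation in the first step, $F\in\mathcal S^*_\alpha(\mathbb B)$.

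Now evaluate the left side of \eqref{Th-C} at $z=ru_0$ with $l_z=l$. The restricted function is then exactly the rotated extremal function with $\theta=\pi/2$, so equality holds in \eqref{Th-C}.

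\textbf{Main obstacle.} The main point needing care is the reduction step. One must check that the starlikeness condition \eqref{SL1}, which is stated for all $w$ and all $l_w\in T(w)$, genuinely yields $h\in\mathcal S^*_\alpha(\mathbb U)$ for the particular functional $l_z$ used in \eqref{Th-C}. This works because the functional disappears from the expression: $[DF(w)]^{-1}F(w)$ is a scalar multiple of $w$, so $\|w\|/l_w([DF(w)]^{-1}F(w))$ does not depend on the choice of $l_w$.
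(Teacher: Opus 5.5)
Your proposal is correct and follows essentially the same route as the paper. The paper's auxiliary function $g(\zeta)=1+Df(\zeta z_0)(\zeta z_0)/f(\zeta z_0)$ is exactly your $\zeta h'(\zeta)/h(\zeta)$. It then applies the same triangle inequality $|a_4^2-a_3^2|\le|a_3|^2+|a_4|^2$ and bounds $|a_3|,|a_4|$ using $|h_n|\le 2$ for the Carath\'eodory coefficients, which is precisely how the classical bounds $|a_3|\le(1-\alpha)(3-2\alpha)$ and $|a_4|\le\frac23(1-\alpha)(3-2\alpha)(2-\alpha)$ that you quote arise. Finally, it proves sharpness with the same mapping $F(z)=z(1-il_u(z))^{-2(1-\alpha)}$ at $z=ru$, although it cites an external lemma for starlikeness where you verify it directly.
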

 \begin{cor}\label{Cor-recover}
 	Taking $X=\mathbb{C}$ (so that $\|z\|=|z|$, $l_z(w)=w$) and $\alpha=0$ 	in Theorem \ref{Th-1}, and writing $F(z)=zf(z)=z+\sum_{n\ge2}a_nz^n$, 	the bound \eqref{Th-C} reduces to
 	 	\begin{align*}
 		|a_3^2-a_4^2|=|T_2(3)(f)|\le 25,
 	\end{align*}
 	which is precisely the sharp bound of Ali et al. \emph{\cite{Ali et al.}}
 	for starlike functions of order zero.
 \end{cor}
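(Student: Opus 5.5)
The plan is a direct specialization of Theorem \ref{Th-1}: choose $X=\mathbb{C}$ and $\alpha=0$, then evaluate both sides of \eqref{Th-C}.

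\emph{The hypothesis.} For $X=\mathbb{C}$ and $z\neq0$, the only functional in $T(z)$ is $l_z(w)=w\,\overline{z}/|z|$. For $F(z)=zf(z)$ we have $[DF(z)]^{-1}F(z)=F(z)/F'(z)$. Hence
\begin{align*}
\frac{|z|}{l_z([DF(z)]^{-1}F(z))}=\frac{zF'(z)}{F(z)},
\end{align*}
so condition \eqref{SL1} with $\alpha=0$ is exactly $\Re\{zF'(z)/F(z)\}>0$. Thus $F\in\mathcal{S}^*_0(\mathbb{U})$ is the classical starlike class. Moreover, the hypothesis $f(0)=1$ is exactly the normalization $F(z)=z+\sum_{n\ge2}a_nz^n$ with $a_n$ the Taylor coefficients of $zf(z)$.

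\emph{The left-hand side.} In one variable, $D^kF(0)(z^k)/k!=a_kz^k$. Therefore
\begin{align*}
\frac{l_z(D^kF(0)(z^k))}{k!\,|z|^k}=a_k\,\frac{z^{k}\overline{z}}{|z|^{k+1}}=a_k\Bigl(\frac{z}{|z|}\Bigr)^{k-1}.
\end{align*}
Writing $z=re^{i\theta}$, the expression inside the modulus in \eqref{Th-C} becomes $a_4^2e^{6i\theta}-a_3^2e^{4i\theta}$. Its modulus is $|a_4^2e^{2i\theta}-a_3^2|$. Taking $z=r\in(0,1)$ real, so $\theta=0$, this is exactly $|a_3^2-a_4^2|=|T_2(3)(f)|$, where $T_2(3)$ is formed from the coefficients $a_n$ of $F$.

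If one instead adopts the paper's shorthand $l_z(w)=w$, the same reduction holds on the positive real axis. There $l_z(z)=z=|z|$, so this $l_z$ genuinely belongs to $T(z)$.

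\emph{The right-hand side.} At $\alpha=0$ the bound in \eqref{Th-C} is
\begin{align*}
\tfrac19\cdot1\cdot9\cdot25=25.
\end{align*}
Hence Theorem \ref{Th-1} gives $|a_3^2-a_4^2|\le25$ for every starlike $F$ normalized as above.

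\emph{Sharpness.} Rather than relying on the unspecified extremal of Theorem \ref{Th-1}, I will check directly the rotated Koebe function $F(z)=z/(1-iz)^2$, that is, $f(z)=(1-iz)^{-2}$ with $f(0)=1$. It is starlike, and its coefficients are $a_n=n\,i^{n-1}$. Thus $a_3=-3$ and $a_4=-4i$, so
\begin{align*}
|a_3^2-a_4^2|=|9-(-16)|=25.
\end{align*}
This attains the bound and coincides with the sharp value of Ali et al.\ \cite{Ali et al.}.

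The only point needing care is the choice of $l_z$ and of the direction of $z$, which removes the unimodular factors. Everything else is substitution.
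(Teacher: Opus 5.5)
Your proposal is correct and follows the paper's route: you specialize Theorem \ref{Th-1} to $X=\mathbb{C}$, $\alpha=0$, where the right-hand side becomes $25$, and you attain the bound with $z/(1-iz)^2$, which is exactly the extremal \eqref{Sh-1} with $u=1$ and $\alpha=0$. Your observation that $T(z)$ consists of $l_z(w)=w\bar z/|z|$ is also correct: the paper's shorthand $l_z(w)=w$ holds only for $z>0$, so the reduction to $|a_3^2-a_4^2|$ should be read at $z=r\in(0,1)$, a point the paper glosses over.
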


 \begin{theo}\label{Th-2}Let $f\in\mathcal{H}(\mathbb{U}^n,\mathbb{C})$ with $f(0)=1$ and $F(z)=zf(z)\in\mathcal{S}^*_{\alpha}(\mathbb{U}^n)$. Then
 	\begin{align}\label{K-1}
 	\displaystyle
 	&\left\|\frac{1}{3!}D^3F(0)\left(z^2,\frac{D^3F(0)(z^3)}{3!}\right)-\frac{1}{4!}D^4F(0)\left(z^3,\frac{D^4F(0)(z^4)}{4!}\right)\right\|_{\infty}\\\leq&
 	\frac{1}{9}(1-\alpha)^2(3-2\alpha)^2(25-16\alpha+4\alpha^2)||z||_{\infty}\nonumber
 	\end{align}
 	for all $z\in\mathbb{U}^n$. The inequality \eqref{K-1} is sharp.
 \end{theo}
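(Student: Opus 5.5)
Fix $z\in\mathbb{U}^n\setminus\{0\}$ and reduce the mapping problem to a one-variable starlike function of order $\alpha$; then apply a sharp one-variable estimate, which is the content of Theorem \ref{Th-1} with $X=\mathbb{C}$. Write $f(z)=1+\sum_{k\ge1}P_k(z)$ with $P_k$ homogeneous polynomials of degree $k$. Then $\frac{D^{k+1}F(0)(z^{k+1})}{(k+1)!}=P_k(z)z$. Polarization gives
\begin{align*}
\frac{1}{3!}D^3F(0)\left(z^2,w\right)=\frac{1}{3}\big(DP_2(z)w\big)z+\frac{1}{3}P_2(z)w\cdot 1+\frac13 P_2(z)w
\end{align*}
and similarly for $D^4$. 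A cleaner route: compute directly that for $w=P_2(z)z$ one gets $\frac{1}{3!}D^3F(0)(z^2,w)=P_2(z)^2z$. This holds because $DP_2(z)z=2P_2(z)$, so $\frac{1}{3}(2P_2(z)^2+P_2(z)^2)z=P_2(z)^2 z$. Likewise $\frac{1}{4!}D^4F(0)(z^3,P_3(z)z)=\frac14(3P_3^2+P_3^2)z=P_3(z)^2z$. I will carefully verify this polarization identity, namely that the symmetric form of $P_k(z)z$ evaluated at $(z^k,u)$ equals $\frac{1}{k+1}\big((DP_k(z)u)z+P_k(z)u\cdot k\cdot\frac{1}{k}\cdots\big)$. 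Getting the combinatorial factor right is the only computational point. The left side of \eqref{K-1} then becomes $|P_2(z)^2-P_3(z)^2|\,\|z\|_\infty$.

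Next I bound $|P_2(z)^2-P_3(z)^2|$. Fix $z\ne0$, put $u=z/\|z\|_\infty$, and define $h(\zeta)=\zeta f(\zeta u)=\zeta+\sum a_k\zeta^k$ on $\mathbb{U}$, with $a_{k+1}=P_k(u)$. The standard fact for mappings of the form $F(z)=zf(z)$ (as used in the proof of Theorem \ref{Th-1}) is that starlikeness of order $\alpha$ of $F$ implies $h\in\mathcal{S}^*_\alpha(\mathbb{U})$. To see this on $\mathbb{U}^n$, choose $j$ with $|u_j|=1$ and use \eqref{eq3} with $l$ picking coordinate $j$. A direct computation gives $[DF(z)]^{-1}F(z)=\frac{f(z)}{f(z)+Df(z)z}z$, so condition \eqref{SL1} reads $\Re\frac{f+Df(z)z}{f}>\alpha$. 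At $z=\zeta u$ this is exactly $\Re\frac{\zeta h'(\zeta)}{h(\zeta)}>\alpha$. Then $|P_2(z)^2-P_3(z)^2|=|a_3^2\|z\|^4-a_4^2\|z\|^6|$, with $\|z\|_\infty<1$.

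The main obstacle is that the powers of $\|z\|$ differ, so I cannot simply quote $|a_3^2-a_4^2|\le M$. I plan to use $g(\zeta)=\zeta f(\zeta z)$ instead, i.e. scale by $r=\|z\|$ without normalizing. Then $g(\zeta)/r$-type normalization fails, but $\zeta\mapsto \frac{1}{r}h(r\zeta)$ is again in $\mathcal{S}^*_\alpha(\mathbb{U})$, since subordination is preserved under dilation. Its coefficients are $b_3=a_3r^2$ and $b_4=a_4r^3$. Hence $P_2(z)^2-P_3(z)^2=(b_3^2-b_4^2)r^{-2}\cdot r^2$; checking the powers, $P_2(z)=a_3r^2=b_3$ and $P_3(z)=a_4r^3=b_4$. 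So the quantity is exactly $b_3^2-b_4^2$, and it is bounded by $\frac19(1-\alpha)^2(3-2\alpha)^2(25-16\alpha+4\alpha^2)$ by the one-variable case of Theorem \ref{Th-1}. The reduction to one variable thus works cleanly once dilation is used.

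For sharpness, take the one-variable extremal $k_\alpha(\zeta)=\zeta(1-\zeta)^{-2(1-\alpha)}$, possibly rotated, as in Theorem \ref{Th-1}. Set $F(z)=z\,k_\alpha(z_1)/z_1$, which lies in $\mathcal{S}^*_\alpha(\mathbb{U}^n)$ by the criterion above. Evaluate at $z=(r,0,\dots,0)$ and let $r\to1^-$; the ratio of the left side to $\|z\|_\infty$ then tends to the bound. If the unrotated Koebe function does not attain the bound, I will take the rotation used in Theorem \ref{Th-1}, e.g. $\zeta(1-i\zeta)^{-2(1-\alpha)}$, for which $a_3^2$ and $-a_4^2$ align.
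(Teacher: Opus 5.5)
Your argument is correct, but it reaches interior points differently from the paper. The paper works at $z_0=z/\|z\|_\infty$. It attaches the Carath\'eodory function $h_k$ to a maximal coordinate and bounds $|P_2(z_0)^2-P_3(z_0)^2|$ through Lemma~\ref{Lem1}, exactly as in Theorem~\ref{Th-1}. It then specializes to $z_0\in\partial_0\mathbb U^n$, where every coordinate is maximal, and applies the maximum modulus theorem on the polydisk to the components $(P_2(z)^2-P_3(z)^2)z_k$.

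You instead observe that for each interior $z$, with $r=\|z\|_\infty$, the slice $\zeta\mapsto\zeta f(\zeta z)=\frac1r h(r\zeta)$ is itself in $\mathcal S^*_\alpha(\mathbb U)$. Its third and fourth coefficients are exactly $P_2(z)$ and $P_3(z)$, which disposes of the $r^4$ versus $r^6$ mismatch at once. This needs no maximum principle and uses the polydisk only through a single maximal-coordinate functional, so it works verbatim on the unit ball of any Banach space. It also shows the inequality is strict at every $z\neq0$, since $|P_2(z)|\le r^2(1-\alpha)(3-2\alpha)$ and $|P_3(z)|\le\frac23r^3(1-\alpha)(3-2\alpha)(2-\alpha)$.

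Consequently your formulation of sharpness, with $r\to1^-$, is the accurate one. At $z=(r,0,\dots,0)$ the extremal map gives $(1-\alpha)^2(3-2\alpha)^2\bigl(r^4+\tfrac49(2-\alpha)^2r^6\bigr)r$. This is strictly below the right-hand side of \eqref{K-1} for $r<1$, whereas the paper asserts equality at fixed $r$.

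Two small repairs are needed. First, your initial polarization display contains a spurious extra term $\frac13P_2(z)w$. The correct identity is $\frac{1}{(k+1)!}D^{k+1}F(0)(z^k,w)=\frac1{k+1}\bigl[(DP_k(z)w)z+P_k(z)w\bigr]$. You can bypass it entirely: $w=P_k(z)z$ is a scalar multiple of $z$, so multilinearity gives $\frac{1}{(k+1)!}D^{k+1}F(0)(z^k,P_k(z)z)=P_k(z)\cdot P_k(z)z$ directly.

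Second, there is no need to hedge on the rotation. For the unrotated $k_\alpha$ all coefficients are positive and $|a_3^2-a_4^2|$ stays well below the bound, so you must use $\zeta(1-i\zeta)^{-2(1-\alpha)}$ (equivalently, the unrotated map evaluated at $z=(ir,0,\dots,0)$). Membership of this map in $\mathcal S^*_\alpha(\mathbb U^n)$ follows because, for every $l_z\in T(z)$, $\|z\|_\infty/l_z([DF(z)]^{-1}F(z))=1+Df(z)z/f(z)=\alpha+(1-\alpha)\frac{1+iz_1}{1-iz_1}$. So your slice criterion is in fact an equivalence.
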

  \begin{theo}\label{Th-3} Let $f\in\mathcal{H}(\mathbb{B},\mathbb{C})$ satisfy $f(0)=1$ and suppose that the mapping $F:\mathbb{B}\to X$ defined by 	$F(z)=f(z)z$ belongs to $\mathcal{S}_\alpha^*(\mathbb{B})$ for $0\le\alpha\le\frac{2}{3}$.
 Then, for every $z\in\mathbb{B}\setminus\{0\}$, we have
 \begin{align}\label{Th-k}
 \displaystyle
 &\bigg|\left(\frac{l_{z}(D^2F(0)(z^2))}{2!\|z\|^2}-\frac{l_z(D^4F(0)(z^4))}{4!\|z\|^4}\right)\bigg(\left(\frac{l_{z}(D^2F(0)(z^2))}{2!\|z\|^2}\right)^2+\nonumber\\&\frac{l_{z}(D^2F(0)(z^2))}{2!\|z\|^2}\frac{l_{z}(D^4F(0)(z^4))}{4!\|z\|^4}-2\left(\frac{l_{z}(D^3F(0)(z^3))}{3!\|z\|^3}\right)^2\bigg)\bigg|\nonumber\\\le& \frac{4}{9}(1-\alpha)^3(9-7\alpha+2\alpha^2)(21-22\alpha+8\alpha^2).
 \end{align}
 
 The inequality \eqref{Th-k} is sharp.
 \end{theo}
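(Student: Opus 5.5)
The plan is to reduce Theorem \ref{Th-3} to a one-variable coefficient problem for $\mathcal S^*_\alpha(\mathbb U)$, and then solve that problem with Carath\'eodory-function techniques.

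\emph{Reduction to one variable.} Fix $z\in\mathbb B\setminus\{0\}$, put $z_0=z/\|z\|$, and define $h(\zeta)=\zeta f(\zeta z_0)$ for $\zeta\in\mathbb U$. For $F(z)=f(z)z$ we have $DF(z)w=f(z)w+(Df(z)w)\,z$. This gives
\[
[DF(z)]^{-1}F(z)=\frac{f(z)}{f(z)+Df(z)z}\,z .
\]
Hence, for every $l_z\in T(z)$,
\[
\frac{\|z\|}{l_z([DF(z)]^{-1}F(z))}=1+\frac{Df(z)z}{f(z)}.
\]
At the point $z=\zeta z_0$ the right-hand side equals $\zeta h'(\zeta)/h(\zeta)$, so condition \eqref{SL1} says exactly that $h\in\mathcal S^*_\alpha(\mathbb U)$.

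Next, differentiating $F(z)=f(z)z$ gives $D^kF(0)(z^k)=k\,D^{k-1}f(0)(z^{k-1})\,z$. Since $l_z(z)=\|z\|$, it follows that
\[
\frac{l_z(D^kF(0)(z^k))}{k!\|z\|^k}=\frac{D^{k-1}f(0)(z_0^{k-1})}{(k-1)!}=a_k,
\]
where $h(\zeta)=\zeta+\sum_{k\ge2}a_k\zeta^k$. So the left side of \eqref{Th-k} is $|(a_2-a_4)(a_2^2+a_2a_4-2a_3^2)|$ for some $h\in\mathcal S^*_\alpha(\mathbb U)$. Conversely, any $h\in\mathcal S^*_\alpha(\mathbb U)$ yields an admissible $F$ via $f(z)=h(l(z))/l(z)$ for a fixed unit functional $l$; this will be used for sharpness.

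\emph{One-variable estimate.} Write $\zeta h'/h=\alpha+(1-\alpha)p$ with $p(\zeta)=1+\sum c_n\zeta^n$ in the Carath\'eodory class. Comparing coefficients gives
\[
a_2=(1-\alpha)c_1,\qquad a_3=\tfrac{1-\alpha}{2}\bigl(c_2+(1-\alpha)c_1^2\bigr),
\]
and a similar cubic expression for $a_4$ in $c_1,c_2,c_3$.

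\emph{First factor.} The sharp bounds $|a_k|\le\prod_{j=2}^{k}(j-2\alpha)/(k-1)!$ give
\[
|a_2-a_4|\le|a_2|+|a_4|\le \tfrac23(1-\alpha)(9-7\alpha+2\alpha^2),
\]
using $(3-2\alpha)(2-\alpha)+3=9-7\alpha+2\alpha^2$.

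\emph{Second factor.} The main step is the sharp bound
\[
|a_2^2+a_2a_4-2a_3^2|\le\tfrac23(1-\alpha)^2(21-22\alpha+8\alpha^2),\qquad 0\le\alpha\le\tfrac23 .
\]
The naive triangle inequality is not enough here: it gives $45-50\alpha+16\alpha^2$ in place of $21-22\alpha+8\alpha^2$. The reason is that in the extremal case the terms $a_2a_4$ and $-2a_3^2$ partly cancel against $a_2^2$. I would therefore substitute the Libera--Zlotkiewicz representations
\[
2c_2=c_1^2+(4-c_1^2)x,
\]
together with the analogous formula for $c_3$ involving $x,y$ with $|x|,|y|\le1$. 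By rotation invariance I would take $c_1=c\in[0,2]$. Then I would bound the resulting expression by a function $\Phi(c,|x|)$ of two real variables and maximize it over $[0,2]\times[0,1]$.

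I expect the main obstacle to be showing that the maximum is attained at the corner $c=2$. The restriction $\alpha\le 2/3$ should enter exactly there, as the condition under which the relevant partial derivatives keep a fixed sign. If that analysis becomes unwieldy, I would first try to group the expression as $A(c)+B(c)|x|+C(c)|x|^2$ and verify monotonicity in $|x|$ before treating $c$.

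\emph{Sharpness.} Take $h(\zeta)=\zeta/(1-i\zeta)^{2-2\alpha}$. Its coefficients are
\[
a_2=2i(1-\alpha),\qquad a_3=-(1-\alpha)(3-2\alpha),\qquad a_4=-\tfrac{2i}{3}(1-\alpha)(2-\alpha)(3-2\alpha).
\]
A direct computation gives
\[
a_2-a_4=\tfrac{2i}{3}(1-\alpha)(9-7\alpha+2\alpha^2),\qquad a_2^2+a_2a_4-2a_3^2=-\tfrac23(1-\alpha)^2(21-22\alpha+8\alpha^2).
\]
So both factor bounds are attained simultaneously, and their product is the right side of \eqref{Th-k}. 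Lifting this $h$ to $F(z)=h(l(z))\,z/l(z)$ for a unit functional $l$, and choosing $z$ in the direction where $l$ attains its norm, shows that \eqref{Th-k} is sharp.
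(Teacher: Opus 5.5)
Your reduction to $h(\zeta)=\zeta f(\zeta z_0)\in\mathcal S^*_\alpha(\mathbb U)$, the estimate $|a_2-a_4|\le|a_2|+|a_4|$ and the extremal function are exactly the paper's, and your values of both factors at $\zeta/(1-i\zeta)^{2-2\alpha}$ are right. The gap is the bound on $|a_2^2+a_2a_4-2a_3^2|$, which you yourself call the main step. You only announce it, and the normalization you use to set it up is false. With $\beta=1-\alpha$,
$$a_2^2+a_2a_4-2a_3^2=\tfrac{\beta^2}{6}\bigl(6c_1^2+\Psi\bigr),\qquad \Psi=-3c_2^2+2c_1c_3-3\beta c_1^2c_2-2\beta^2c_1^4 .$$
Under $c_n\mapsto e^{in\theta}c_n$ the term $6c_1^2$ has weight $2$ while $\Psi$ has weight $4$, so the functional is not rotation invariant. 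The extremal has $c_1=2i$. At $c_1=c_2=c_3=2$, $\alpha=0$, one gets $|6c_1^2+\Psi|=36$, not $84$. So the reduction to $c_1=c\in[0,2]$ cannot be justified by rotation invariance, and the true maximum is not located at $c_1=2$. What is true is $|6c_1^2+\Psi|\le 6|c_1|^2+|\Psi|$, with equality attainable by a rotation. You may put $c_1=c\ge 0$ only in the rotation-invariant quantity $|\Psi|$.

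With that correction your plan does close the gap, and more easily than you expect. Writing $t=4-c^2$, the Libera--Zlotkiewicz formulas give, for some $|x|,|y|\le 1$,
\[
\Psi=-\bigl(\tfrac14+\tfrac32\beta+2\beta^2\bigr)c^4-\tfrac{1+3\beta}{2}\,c^2t\,x-\tfrac{t(12-c^2)}{4}\,x^2+ct\bigl(1-|x|^2\bigr)y .
\]
Bound each term by its modulus and put $s=|x|$. This gives a majorant $G(c,s)$ of $6c^2+|\Psi|$ whose coefficients of $s$ and $s^2$ are $\frac{1+3\beta}{2}c^2t\ge0$ and $\frac{t(12-c^2)}{4}-ct=\frac{t(2-c)(6+c)}{4}\ge0$. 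Hence
$$G(c,s)\le G(c,1)=12+(4+6\beta)c^2+2\beta^2c^4\le G(2,1)=4(21-22\alpha+8\alpha^2),$$
and multiplying by $\beta^2/6$ gives exactly $\frac23(1-\alpha)^2(21-22\alpha+8\alpha^2)$. No condition on $\alpha$ enters, so your expectation that $\alpha\le\frac23$ appears as a sign condition is mistaken.

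The paper avoids Libera--Zlotkiewicz. It applies the triangle inequality to the Carath\'eodory coefficients after grouping $2h_1\bigl(h_3-\frac32(1-\alpha)h_1h_2\bigr)$, and then invokes Lemma~\ref{Lem1}. That is shorter, but it uses $|h_3-\frac32(1-\alpha)h_1h_2|\le 2(2-3\alpha)$, whereas Lemma~\ref{Lem1} gives only $2\max\{1,|2-3\alpha|\}$. The two agree only for $\alpha\le\frac13$; for $\frac13<\alpha\le\frac23$ that route yields only $\frac23(1-\alpha)^2(19-16\alpha+8\alpha^2)$. So your route, once actually carried out, is the more robust one.

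One small point on sharpness: a norm-one functional need not attain its norm on a general Banach space. Take $l=l_u$ for a unit vector $u$, as the paper does.
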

 \begin{cor}\label{Cor-recover2}
 	Similarly, taking $X=\mathbb{C}$ and $\alpha=0$ in Theorem \ref{Th-3}, the bound \eqref{Th-k} reduces to
 	\begin{align*}
 		|(a_2-a_4)(a_2^2+a_2a_4-2a_3^2)|=|T_3(2)(f)|\le 84,
 	\end{align*}
 	recovering the sharp bound of Ali et al. \emph{\cite{Ali et al.}.}
 \end{cor}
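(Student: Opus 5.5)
The plan is to specialize Theorem \ref{Th-3} to $X=\mathbb{C}$ and $\alpha=0$. Three things need checking: the class hypothesis, the left-hand side of \eqref{Th-k} (which must become $|T_3(2)(f)|$), and the right-hand side (which must equal $84$).

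\textbf{Step 1: the class hypothesis.} For $X=\mathbb{C}$ and $z\neq0$, the set $T(z)$ consists of the single functional
\[
l_z(w)=\frac{w\,\overline{z}}{|z|}.
\]
For $F(z)=zf(z)$ we have $[DF(z)]^{-1}F(z)=F(z)/F'(z)$. Hence
\[
\frac{|z|}{l_z\bigl([DF(z)]^{-1}F(z)\bigr)}=\frac{|z|^2F'(z)}{\overline{z}\,F(z)}=\frac{zF'(z)}{F(z)}.
\]
So condition \eqref{SL1} with $\alpha=0$ is exactly the classical condition $\Re\{zF'(z)/F(z)\}>0$. The hypothesis of Theorem \ref{Th-3} therefore holds whenever $F=zf\in\mathcal{S}^*(\mathbb{U})$. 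Here $a_n$ are the coefficients of $F$, as in the statement.

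\textbf{Step 2: the left-hand side.} Write $F(z)=z+\sum_{n\ge2}a_nz^n$. Then $D^kF(0)(z^k)/k!=a_kz^k$, so
\[
\frac{l_z\bigl(D^kF(0)(z^k)\bigr)}{k!\,|z|^k}=a_k\,\frac{z^{k}\overline{z}}{|z|^{k+1}}=a_k e^{i(k-1)\theta},\qquad z=|z|e^{i\theta}.
\]
Since \eqref{Th-k} holds for every $z\in\mathbb{U}\setminus\{0\}$, I will take $z=r\in(0,1)$, so that $\theta=0$. Each normalized quantity then reduces to $a_k$, and the left side of \eqref{Th-k} becomes
\[
|(a_2-a_4)(a_2^2+a_2a_4-2a_3^2)|=|T_3(2)(f)|.
\]

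\textbf{Step 3: the right-hand side.} At $\alpha=0$, the bound in \eqref{Th-k} is
\[
\tfrac{4}{9}\cdot1\cdot9\cdot21=84.
\]

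\textbf{Step 4: sharpness.} This is inherited from the sharpness in Theorem \ref{Th-3}, but it can also be checked directly with the rotated Koebe function
\[
F(z)=\frac{z}{(1-iz)^2},\qquad a_n=n\,i^{\,n-1}.
\]
Its first coefficients are $a_2=2i$, $a_3=-3$ and $a_4=-4i$. Then $(a_2-a_4)=6i$ and $(a_2^2+a_2a_4-2a_3^2)=-4+8-18=-14$, so $|T_3(2)|=84$.

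\textbf{Main obstacle.} The only delicate point is the phase factors $e^{i(k-1)\theta}$ in Step 2: for general $z$ they make the expression differ from $T_3(2)(f)$. Choosing $z$ real and positive removes them.
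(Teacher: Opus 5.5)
Your proposal is correct and follows the same route as the paper: the corollary is obtained by specializing Theorem~\ref{Th-3} to $X=\mathbb{C}$ and $\alpha=0$, which gives the bound $\tfrac49\cdot 9\cdot 21=84$, and the rotated Koebe function $z/(1-iz)^2$ from \eqref{Sh-1} attains equality. You use the correct functional $l_z(w)=w\bar z/|z|$ and take $z=r\in(0,1)$ to remove the phases $e^{i(k-1)\theta}$; this is more careful than the paper's parenthetical claim $l_z(w)=w$ in Corollary~\ref{Cor-recover}, which holds only for $z>0$.
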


 \begin{theo}\label{Th-4}Let $f\in\mathcal{H}(\mathbb{U}^n,\mathbb{C})$ with $f(0)=1$ and $F(z)=zf(z)\in\mathcal{S}^*_{\alpha}(\mathbb{U}^n)$ for $0\le\alpha\le\frac{2}{3}$. Then
 	\begin{align}\label{K-2}
 		\displaystyle
 		&\bigg\|\bigg(\frac{1}{2!}D^2F(0)\left(z,\frac{D^2F(0)(z^2)}{2!}\right)-\frac{1}{4!}D^4F(0)\left(z^3,\frac{D^2F(0)(z^2)}{2!}\right)\nonumber\\&+\frac{2}{3!}D^3F(0)\left(z^2,\frac{D^3F(0)(z^3)}{3!}\right)\bigg)\left(\frac{1}{2!}D^2F(0)(z,z)-\frac{1}{4!}D^4F(0)(z,z^3)\right)\bigg\|_{\infty}\nonumber\\\leq&
 		\frac{4}{9}(1-\alpha)^3(9-7\alpha+2\alpha^2)(21-22\alpha+8\alpha^2)||z||_{\infty}
 	\end{align}
 	for all $z\in\mathbb{U}^n$. The inequality \eqref{K-2} is sharp.
 \end{theo}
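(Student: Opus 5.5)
The plan is to reduce \eqref{K-2} to the scalar inequality behind Theorem \ref{Th-3}, using the radial form $F(z)=zf(z)$. Write $f(z)=1+\sum_{k\ge1}P_k(z)$ with $P_k$ homogeneous polynomials of degree $k$. Then
\[
\frac{D^kF(0)(z^k)}{k!}=P_{k-1}(z)\,z .
\]

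First I compute the mixed multilinear terms. By polarization and Euler's identity $DP_{k-1}(z)z=(k-1)P_{k-1}(z)$,
\[
\frac{1}{k!}D^kF(0)(z^{k-1},w)=\frac1k\bigl[P_{k-1}(z)\,w+(DP_{k-1}(z)w)\,z\bigr].
\]
Substituting $w=P_m(z)z$ gives
\[
\frac{1}{k!}D^kF(0)\bigl(z^{k-1},P_m(z)z\bigr)=P_{k-1}(z)P_m(z)\,z .
\]
Every term in \eqref{K-2} is of this form, so the vector expression equals a scalar times $z$ (the outer product being read componentwise, as the statement intends). Setting $a_j(z)=P_{j-1}(z)$, the scalar is, up to the convention of signs in the statement,
\[
\Phi(z)=(a_2-a_4)\,(a_2^2+a_2a_4-2a_3^2)(z).
\]
Thus the left side of \eqref{K-2} reduces to $|\Phi(z)|\,\|z\|_\infty$, possibly with extra powers of $\|z\|_\infty$ coming from homogeneity.

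Next, fix $u\in\partial\mathbb U^n$ (so $\|u\|_\infty=1$) and set $g(\zeta)=\zeta f(\zeta u)$ on $\mathbb U$. From $DF(z)w=f(z)w+(Df(z)w)z$ one gets
\[
[DF(z)]^{-1}F(z)=\frac{f(z)\,z}{f(z)+Df(z)z}.
\]
By \eqref{eq3}, $l_z$ of this equals $\|z\|\, f/(f+Df(z)z)$, so condition \eqref{SL1} becomes
\[
\Re\Bigl\{1+\frac{Df(z)z}{f(z)}\Bigr\}>\alpha .
\]
At $z=\zeta u$ this quantity is $\zeta g'(\zeta)/g(\zeta)$, hence $g\in\mathcal S^*_\alpha(\mathbb U)$, with coefficients $b_k=P_{k-1}(u)$.

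For $z\ne0$ put $u=z/\|z\|_\infty$. Homogeneity gives $P_k(z)=\|z\|_\infty^kP_k(u)$, so $\Phi(z)$ is $T_3(2)(g)$ multiplied by a nonnegative power of $\|z\|_\infty$. That power is at most $1$ because $z\in\mathbb U^n$, and the case $z=0$ is trivial. It then suffices to invoke the one-variable estimate
\[
|T_3(2)(g)|\le \tfrac49(1-\alpha)^3(9-7\alpha+2\alpha^2)(21-22\alpha+8\alpha^2),\qquad 0\le\alpha\le\tfrac23,
\]
which is the inequality underlying Theorem \ref{Th-3} (the case $X=\mathbb C$ there).

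The main obstacle is making this one-variable bound rigorous if it is not simply quoted. The approach would be: write $zg'/g=\alpha+(1-\alpha)p$ with $p$ in the Carathéodory class, and express $b_2,b_3,b_4$ through $c_1,c_2,c_3$. Then use the Libera–Zlotkiewicz parametrization $c_1=2\zeta_1$, $c_2=2\zeta_1^2+2(1-\zeta_1^2)\zeta_2,\dots$, and maximize over $\zeta_1\in[0,2]$ after the triangle inequality. The restriction $\alpha\le2/3$ is exactly where the maximum sits at the endpoint $c_1=2$.

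For sharpness, take $f(z)=(1-z_1)^{-2(1-\alpha)}$, possibly with $z_1$ replaced by a rotation $\epsilon z_1$ chosen to match the extremal function of Theorem \ref{Th-3}. Then $F=zf\in\mathcal S^*_\alpha(\mathbb U^n)$ by the reduction above, and evaluating at $z=re_1$ and letting $r\to1^-$ attains the bound.
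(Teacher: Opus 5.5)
\textbf{Gap 1: you bound the wrong functional.} Your slice reduction ($\zeta\mapsto\zeta f(\zeta u)$ lies in $\mathcal S^*_\alpha(\mathbb U)$) and the identity $\frac1{k!}D^kF(0)(z^{k-1},P_m(z)z)=P_{k-1}(z)P_m(z)z$ are correct. They are also cleaner than the paper's coordinatewise Carath\'eodory functions plus the maximum modulus theorem. The problem is what you do with them. Applied to \eqref{K-2} as printed, the identity gives the first factor $(a_2^2-a_2a_4+2a_3^2)z$: the $D^4F(0)$ term carries a minus sign and the $D^3F(0)$ term a plus sign. So the scalar is
\[
\Lambda=(a_2-a_4)(a_2^2-a_2a_4+2a_3^2),
\]
not $T_3(2)$. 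This is not a sign convention you can absorb. The rotation $g\mapsto -ig(i\zeta)$ sends $a_2^2+a_2a_4-2a_3^2$ to $-(a_2^2-a_2a_4+2a_3^2)$, but it also sends $a_2-a_4$ to $i(a_2+a_4)$. Hence no rotation turns $\Lambda$ into $T_3(2)$, and quoting $|T_3(2)(g)|\le B_2(\alpha)$ says nothing about $|\Lambda|$.

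The upper bound can be rescued by estimating the two factors separately, as the paper does. First, $|a_2-a_4|\le\frac23(1-\alpha)(9-7\alpha+2\alpha^2)$. Second, with $t=1-\alpha$ and $zg'/g=\alpha+t\,h$, $h\in\mathcal P$,
\[
|a_2^2-a_2a_4+2a_3^2|=\tfrac{t^2}{6}\bigl|6h_1^2+3h_2^2-2h_1\bigl(h_3-\tfrac32 t h_1h_2\bigr)+2t^2h_1^4\bigr| .
\]
The paper's \eqref{m-6} expands the $T_3(2)$-factor here instead; this is harmless only because the triangle inequality ignores signs. Note also that the paper's triangle-inequality step works only for $\alpha\le\frac13$. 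Once $\mu=\frac32t<1$, Lemma~\ref{Lem1} gives only $|h_3-\mu h_1h_2|\le2$, which exceeds the value $2(2-3\alpha)$ used there. Your Libera--Zlotkiewicz parametrization is the right tool. Combined with rotation freedom, it shows that the supremum over $|h_1|=c$ is $\frac{t^2}{6}\bigl(12+(4+6t)c^2+2t^2c^4\bigr)$. This is strictly increasing in $c$, so the overall supremum $\frac23t^2(21-22\alpha+8\alpha^2)$ is attained only at $c=2$, for every $\alpha\in[0,1)$. So $\alpha=\frac23$ is not where the maximizer leaves the endpoint, contrary to your remark.

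Your homogeneity step is also wrong as stated, because $a_2-a_4$ mixes degrees $1$ and $3$. Use $g(\zeta)=\zeta f(\zeta z)$ itself instead. It lies in $\mathcal S^*_\alpha(\mathbb U)$ with coefficients $a_k=P_{k-1}(z)$, and the left side of \eqref{K-2} equals $\|z\|_\infty^2|\Lambda(g)|$ exactly.

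\textbf{Gap 2: the sharpness step fails, and no rotation repairs it.} Each factor bound is attained only when $|h_1|=2$, i.e.\ $h_n=2\epsilon^n$ with $|\epsilon|=1$. For these functions:
\begin{itemize}
\item $|a_2-a_4|=2t\bigl|1-\frac13(2t^2+3t+1)\epsilon^2\bigr|$ is maximal only when $\epsilon^2=-1$;
\item $|a_2^2-a_2a_4+2a_3^2|=\frac{t^2}{6}\bigl|24+(4+24t+32t^2)\epsilon^2\bigr|$ is maximal only when $\epsilon^2=1$.
\end{itemize}
At $\alpha=0$, $\epsilon=i$ (the function \eqref{Sh-2}) gives $a_2=2i$, $a_3=-3$, $a_4=-4i$ and $|\Lambda|=6\cdot 6=36$. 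The choice $\epsilon=1$ (Koebe) gives $2\cdot 14=28$. Both fall short of $B_2(0)=84$.

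By compactness of $\mathcal S^*_\alpha(\mathbb U)$, $\sup|\Lambda|<B_2(\alpha)$. So the left side of \eqref{K-2} is at most $\|z\|_\infty\sup|\Lambda|$, and the constant $B_2(\alpha)$ is never approached. The sharpness assertion, including the paper's appeal to \eqref{Sh-2}, is correct only for the $T_3(2)$-factor $a_2^2+a_2a_4-2a_3^2$, i.e.\ with the signs of the $D^4F(0)$ and $D^3F(0)$ terms reversed; there \eqref{Sh-2} does give $B_2(\alpha)$. For the expression as printed, you can prove the upper bound but not its sharpness.
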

 \begin{rem}\label{rem:B}
 	Set $	B_1(\alpha)=\tfrac19(1-\alpha)^2(3-2\alpha)^2(25-16\alpha+4\alpha^2)$, $ 	B_2(\alpha)=\tfrac49(1-\alpha)^3(9-7\alpha+2\alpha^2)(21-22\alpha+8\alpha^2).$ 	Then Theorems~\ref{Th-1}--\ref{Th-2} give the sharp bound $B_1(\alpha)$ for
 	$0\le\alpha<1$, and Theorems~\ref{Th-3}--\ref{Th-4} give $B_2(\alpha)$ for
 	$0\le\alpha\le\frac23$. In particular $B_1(0)=25$ and $B_2(0)=84$;
 	see Figure~\ref{fig:bounds}.
 \end{rem}
\begin{figure}[htbp]
	\centering
	\scalebox{0.8}{%
		\begin{tikzpicture}
			\begin{axis}[
				width=0.85\textwidth, height=0.52\textwidth,
				xlabel={$\alpha$}, ylabel={Bound value},
				xmin=0, xmax=1, ymin=0, ymax=90,
				grid=major, axis lines=left, samples=200,
				legend style={
					at={(0.98,0.98)},
					anchor=north east,
					draw=none,
					font=\small
				}]
				
				\addplot[blue,thick,domain=0:0.999]
				{(1/9)*(1-x)^2*(3-2*x)^2*(25-16*x+4*x^2)};
				\addlegendentry{$B_1(\alpha)$: Thms.~\ref{Th-1}, \ref{Th-2}}
				
				\addplot[red,thick,domain=0:0.6667]
				{(4/9)*(1-x)^3*(9-7*x+2*x^2)*(21-22*x+8*x^2)};
				\addlegendentry{$B_2(\alpha)$: Thms.~\ref{Th-3}, \ref{Th-4}}
				
				\addplot[
				only marks,
				mark=*,
				mark size=2.6pt,
				black
				] coordinates {(0,25)(0,84)};
				\addlegendentry{Ali et al.: $25$ and $84$}
				
			\end{axis}
		\end{tikzpicture}%
	}
	\caption{The sharp bounds $B_1(\alpha)$ (for $0\le\alpha<1$) and	$B_2(\alpha)$ (for $0\le\alpha\le2/3$). The dots mark the constants
		$25$ and $84$ of Ali et al.\ recovered at $\alpha=0$.}
	\label{fig:bounds}
\end{figure}
 \subsection{{Auxiliary lemmas}}
 Let $\mathcal{P}$ denote the class of analytic functions $h$ in the open unit disk $\mathbb{U}$ satisfying $h(0) = 1$ and $\Re\{h(z)\}> 0$ for all $z \in \mathbb{U}$. A member of $\mathcal P$ is called a Carath\'eodory function.  The following lemma is instrumental in establishing our main results.
\begin{lem}\label{Lem1}\emph{\cite{Cara, Pomme}}
Let $h(z)=1+\sum_{n=1}^{\infty}h_nz^n\in\mathcal{P}$. Then, the following
	inequalities hold true:
	\begin{align*}
		|h_n| &\leq 2, \qquad n \geq 1,\\
		|h_{n+2k}-\mu h_n h_k^2| &\leq 2(1+2\mu),
		\qquad 0\leq\mu\leq 1,\\
		\left|h_2-\frac{h_1^2}{2}\right|
		&\leq 2-\frac{|h_1|^2}{2},
	\end{align*}
	and $$|h_{n+k}-\mu h_nh_k|\leq 2\max\{1,|2\mu-1|\}.$$
\end{lem}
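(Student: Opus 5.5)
We prove the four inequalities of Lemma~\ref{Lem1} in turn, all from the Herglotz representation of $\mathcal P$ together with one positive-semidefinite Gram matrix. Write
\[
h(z)=\int_0^{2\pi}\frac{1+e^{-it}z}{1-e^{-it}z}\,d\mu(t)
\]
with $\mu$ a probability measure on $[0,2\pi)$. Then $h_n=2m_n$, where $m_n=\int e^{-int}\,d\mu(t)$ and $|m_n|\le1$. This immediately gives $|h_n|\le 2$.

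\textbf{Step 1: the key two-index estimate.} For integers $n,k\ge1$, consider the Gram matrix of the functions $1,\ e^{int},\ e^{i(n+k)t}$ in $L^2(\mu)$:
\[
\begin{pmatrix}1&m_n&m_{n+k}\\ \overline{m_n}&1&m_k\\ \overline{m_{n+k}}&\overline{m_k}&1\end{pmatrix}\succeq 0 .
\]
Taking the Schur complement with respect to the middle entry (equivalently, projecting away the middle vector and applying Cauchy--Schwarz) yields
\[
|m_{n+k}-m_nm_k|\le \sqrt{(1-|m_n|^2)(1-|m_k|^2)} .
\]

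\textbf{Step 2: the inequality for $h_{n+k}-\mu h_nh_k$.} First take $\mu=\tfrac12$: then $h_{n+k}-\tfrac12h_nh_k=2(m_{n+k}-m_nm_k)$, so Step 1 bounds it by $2$. Next take $\mu=1$: here $h_{n+k}-h_nh_k=2(m_{n+k}-m_nm_k)-2m_nm_k$, so its modulus is at most
\[
2\Bigl(\sqrt{(1-|m_n|^2)(1-|m_k|^2)}+|m_n||m_k|\Bigr)\le 2,
\]
by Cauchy--Schwarz in $\mathbb R^2$. The case $\mu=0$ is $|h_{n+k}|\le2$.

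Since the expression is affine in $\mu$, its modulus is convex in $\mu$. Interpolating between the endpoints therefore gives the bound $2$ on $[0,1]$. Outside $[0,1]$ we use the triangle inequality with $|h_nh_k|\le4$:
\begin{itemize}
\item for $\mu>1$, write it as $(h_{n+k}-h_nh_k)+(1-\mu)h_nh_k$, giving the bound $2+4(\mu-1)=2(2\mu-1)$;
\item for $\mu<0$, the direct bound is $2+4|\mu|=2(1-2\mu)$.
\end{itemize}
Together these give $2\max\{1,|2\mu-1|\}$.

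\textbf{Step 3: the inequality for $h_{n+2k}-\mu h_nh_k^2$.} Again use convexity in $\mu$, so it suffices to check $\mu=0$, where the bound is $2$, and $\mu=1$, where we need the bound $6$. For $\mu=1$, split
\[
h_{n+2k}-h_nh_k^2=(h_{n+2k}-h_{n+k}h_k)+h_k(h_{n+k}-h_nh_k).
\]
Applying Step 2 with $\mu=1$ to each bracket (the first with indices $n+k,k$) together with $|h_k|\le2$ gives $2+2\cdot2=6$. Interpolating, the bound on $[0,1]$ is $(1-\mu)\cdot 2+\mu\cdot 6=2(1+2\mu)$.

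\textbf{Step 4: the third inequality.} Write $h=(1+w)/(1-w)$, where $w$ is a Schur function with $w(0)=0$, so $w(z)=z\varphi(z)$ with $|\varphi|\le1$. If $\varphi(z)=a+bz+\cdots$, then $|a|\le1$ and, by Schwarz--Pick, $|b|\le1-|a|^2$. Expanding gives $h_1=2a$ and $h_2=2a^2+2b$. Hence
\[
h_2-\tfrac12h_1^2=2b,
\qquad
\bigl|h_2-\tfrac12h_1^2\bigr|\le 2(1-|a|^2)=2-\tfrac12|h_1|^2 .
\]

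The main obstacle is Step 1 together with its use at $\mu=1$. The naive triangle inequality loses too much there, and the sharp constants rely on the Gram-matrix estimate. The splitting in Step 3 must also be chosen so that only $\mu=1$ instances of Step 2 appear; a choice such as $\mu=\tfrac12$ in Step 3 overshoots to $8$.
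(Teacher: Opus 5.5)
Your proof is correct. The paper gives no argument for this lemma and simply quotes the four inequalities from the cited sources, so your write-up is a self-contained route rather than a variant of anything in the paper. You reduce the first, second and fourth inequalities to one fact: positivity of the $3\times 3$ moment (Gram) matrix of the Herglotz measure, which gives $|m_{n+k}-m_nm_k|\le\sqrt{(1-|m_n|^2)(1-|m_k|^2)}$. The third inequality then comes from Schwarz--Pick. The second inequality follows from the $\mu=1$ case of the fourth, via your splitting $h_{n+2k}-h_nh_k^2=(h_{n+2k}-h_{n+k}h_k)+h_k(h_{n+k}-h_nh_k)$ and convexity in $\mu$. The citation buys brevity; your approach buys a single mechanism behind all the constants and lets the reader check them.

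One refinement. Your treatment of $\mu\notin[0,1]$ uses the triangle inequality and so covers only real $\mu$. The statement puts no restriction on $\mu$ in the fourth inequality, and that inequality is often quoted for complex $\mu$. Step 1 handles this directly. Write
\[
h_{n+k}-\mu h_nh_k=2(m_{n+k}-m_nm_k)+2(1-2\mu)m_nm_k ,
\]
and put $x=|m_n|$, $y=|m_k|$, $c=|2\mu-1|$. Then
\[
|h_{n+k}-\mu h_nh_k|\le 2\Bigl(\sqrt{(1-x^2)(1-y^2)}+c\,xy\Bigr)\le 2\max\{1,c\}\Bigl(\sqrt{(1-x^2)(1-y^2)}+xy\Bigr)\le 2\max\{1,c\}
\]
for every $\mu\in\mathbb{C}$. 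This makes the separate cases $\mu=0,\tfrac12,1$, the interpolation, and the two triangle-inequality extensions unnecessary. For the paper's own application the parameter is $\mu=\tfrac32(1-\alpha)$, which is real, so your version already suffices there.
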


\begin{proof}[\bf Proof of Theorem \ref{Th-1}]
By the given condition, we have $F\in \mathcal{S}^*_{\alpha}(\mathbb{B})$, where $0\leq \alpha<1$. Then \eqref{SL1} yields
\begin{align*}
	\Re\left\lbrace \frac{\|z\|}{l_z\left([D F(z)]^{-1}F(z)\right)}\right\rbrace>\alpha, \quad z\in\mathbb{B}\backslash \{0\},\quad l_z\in T(z).
\end{align*}

Let $z\in \mathbb{B}\backslash\{0\}$ be fixed and let $z_0=z/\|z\|$. The function $g:\mathbb{U}\to \mathbb{C}$ is defined as follows
\begin{align}\label{Th-21.1}	
	g(\zeta)=\begin{cases}
		\frac{\zeta}{l_{z_0}\left([D F(\zeta z_0)]^{-1}F(\zeta z_0)\right)},& \zeta\neq 0,\\[8pt]
		1& \zeta=0.
	\end{cases}
\end{align}
Clearly, $g\in\mathcal{H}(\mathbb{U})$ and $g(0)=1$.
Since $l_{\zeta z}(.)=\frac{|\zeta|}{\zeta}l_z$, for each $\zeta\in\mathbb{C}\backslash\{0\}$, we have
\begin{align*}
	g(\zeta)=\frac{\zeta}{l_{z_0}\left([D F(\zeta z_0)]^{-1}F(\zeta z_0)\right)}=\frac{\|\zeta z_0\|}{l_{\zeta z_0}\left([D F(\zeta z_0)]^{-1}F(\zeta z_0)\right)}
\end{align*}	
for $\zeta\in\mathbb{U}\backslash\{0\}$ and hence  $\Re\{g(\zeta)\}>\alpha$
for all $\zeta\in\mathbb{U}$. 
\par Let $h(\zeta)=\frac{g(\zeta)-\alpha}{1-\alpha}$ for all $\zeta\in\mathbb{U}$. We note that  $h\in\mathcal{H}(\mathbb{U})$ with $\Re(h(\zeta))>0$ for all $\zeta\in\mathbb{U}$ and $h(0)=1$. Also let
$h(\zeta)=1+h_1\zeta+h_2\zeta^2+h_3\zeta^3+\ldots$. It follows that,
\begin{align}\label{Th-21.2}
	g(\zeta)=1+(1-\alpha)h_1\zeta+(1-\alpha)h_2\zeta^2+(1-\alpha)h_3\zeta^3+\ldots
\end{align}	

Using a similar method as in Theorem 7.1.14 \cite{Graham-Kohr}, we have
\begin{align}\label{Th-21.2b}
	[D F(z)]^{-1}\eta=\frac{1}{f(z)}\left\lbrack \eta-\frac{\frac{(Df(z)\eta)z}{f(z)}}{1+\frac{Df(z)z}{f(z)}}\right\rbrack.
\end{align}

Therefore 
\begin{align}\label{Th-21.3}
	[D F(z)]^{-1}F(z)=\frac{zf(z)}{f(z)+D f(z)z},
\end{align}
which implies that $\frac{\|z\|}{l_z\left([D F(z)]^{-1}F(z)\right)}=1+\frac{Df(z) z}{f(z)}$
and so
\begin{align}\label{Th-21.4}
	g(\zeta)=\frac{\|\zeta z_0\|}{l_{\zeta z_0}\left([D F(\zeta z_0)]^{-1}F(\zeta z_0)\right)}=1+\frac{Df(\zeta z_0) \zeta z_0}{f(\zeta z_0)}.
\end{align}

Let
\begin{align}\label{Th-21.5}
	f(\zeta z_0)=1+D f(0)(z_0)\zeta+\frac{1}{2!}D^2 f(0)(z_0^2)\zeta^2+\frac{1}{3!}D^3 f(0)(z_0^3)\zeta^3+\ldots.
\end{align}

Note that 
\begin{align}\label{Th-21.6}
	D f(\zeta z_0)(\zeta z_0)=D f(0)(z_0)\zeta+D^2 f(0)(z_0^2)\zeta^2+\frac{1}{2}D^3 f(0)(z_0^3)\zeta^3+\ldots.
\end{align}

Using \eqref{Th-21.5} and \eqref{Th-21.6} in \eqref{Th-21.4}, we obtain
\begin{align}\label{Th-21.7}
	&\left(1+(1-\alpha)h_1\zeta+(1-\alpha)h_2\zeta^2+(1-\alpha)h_3\zeta^3+\ldots\right)\times\\&\left(1+D f(0)(z_0)\zeta+\frac{1}{2!}D^2 f(0)(z_0^2)\zeta^2+\frac{1}{3!}D^3 f(0)(z_0^3)\zeta^3+\ldots\right)\nonumber\\=&
	1+2Df(0)(z_0)\zeta+\frac{3}{2}D^2f(0)(z_0^2)\zeta^2+\frac{2}{3}D^3f(0)(z_0^3)\zeta^3+\ldots.\nonumber
\end{align}

From \eqref{Th-21.7}, we find that
\begin{align}\label{Th-21.8}
	\begin{cases}
		Df(0)(z_0)&=(1-\alpha)h_1,\\[8pt]
		D^2f(0)(z_0^2)&=(1-\alpha)h_2+((1-\alpha)h_1)^2,\\[8pt]
		D^3f(0)(z_0^3)&=\left((1-\alpha)h_1\right)^3+3(1-\alpha)^2h_1h_2+2(1-\alpha)h_3.
	\end{cases}
\end{align}

Since $F(z)=zf(z)$, we have (see \cite[Theorem 2]{Xiong-Yu-Wang-2025})
\begin{align}\label{Th-2x}
	D^mF(0)(z^m)=mD^{m-1}f(0)(z^{m-1})z,
\end{align}
where $m\in\mathbb{N}$.
Consequently from \eqref{Th-2x} it follows that
\begin{align}\label{Th-2y}
	\displaystyle
	\frac{l_{z}(D^mF(0)(z^m))}{m!}=\frac{D^{m-1}f(0)(z^{m-1})}{(m-1)!}||z||.
\end{align}

In particular, we have 
\begin{align}\label{Th-1.8}
	\begin{cases}
		\frac{l_{z}(D^4F(0)(z^4))}{4!}&=\frac{D^3f(0)(z^3)}{3!}\|z\|,\\[8pt]
		\frac{l_{z}(D^3F(0)(z^3))}{3!}&=\frac{D^2f(0)(z^2)}{2!}\|z\|,\\[8pt]
		\frac{l_z(D^2F(0)(z^2))}{2!}&=Df(0)(z) \|z\|.
	\end{cases}
\end{align}

Using \eqref{Th-1.8} and \eqref{Th-21.8}, we get
\begin{align}\label{Th-21.9}
	&\frac{l_{z}(D^4F(0)(z^4))}{4!\|z\|^4}=\frac{1}{6}\left(\left((1-\alpha)h_1\right)^3+3(1-\alpha)^2h_1h_2+2(1-\alpha)h_3\right),
\end{align}
\begin{align}\label{Th-21.10}
	\frac{l_{z}(D^3F(0)(z^3))}{3!\|z\|^3}=\frac{1}{2}\left((1-\alpha)h_2+((1-\alpha)h_1)^2\right)
\end{align}
and
\begin{align}\label{Th-21.11}
	\frac{l_z(D^2F(0)(z^2))}{2!\|z\|^2}=(1-\alpha)h_1.
\end{align}

\begin{align}\label{Th-3.2.19}
	\displaystyle
	&\left|\left(\frac{l_{z}(D^4F(0)(z^4))}{4!\|z\|^4}\right)^2-\left(\frac{l_z(D^3F(0)(z^3))}{3!\|z\|^3}\right)^2\right|\nonumber\\&\leq\left|\frac{l_{z}(D^3F(0)(z^3))}{3!\|z\|^3}\right|^2+\left|\frac{l_z(D^4F(0)(z^4))}{4!\|z\|^4}\right|^2\nonumber\\&\le\frac{(1-\alpha)^2}{4}\left(\frac{1}{9}\left((1-\alpha)^2|h_1|^3+3(1-\alpha)|h_1||h_2|+2|h_3|\right)^2+\left(|h_2|+(1-\alpha)|h_1|^2\right)^2\right). 
\end{align}

Therefore, applying Lemma \ref{Lem1} to \eqref{Th-3.2.19}, we obtain 
\begin{align*}
	\displaystyle
	\left|\left(\frac{l_{z}(D^4F(0)(z^4))}{4!\|z\|^4}\right)^2-\left(\frac{l_z(D^3F(0)(z^3))}{3!\|z\|^3}\right)^2\right|\le \frac{1}{9}(1-\alpha)^2(3-2\alpha)^2(25-16\alpha+4\alpha^2).
\end{align*}

\medskip	
To prove sharpness, consider the function $F$ defined by
\begin{align}\label{Sh-1}
	\displaystyle
	F(z)=zf(z)=\frac{z}{\left(1-il_u(z)\right)^{2(1-\alpha)}},\quad z\in\mathbb{B},\quad \|u\|=1.
\end{align}

According to Lemma 2.2 \cite{Xu-He-Xu}, it is clear that $F\in\mathcal{S}^*_{\alpha}(\mathbb{B})$. Note that
\begin{align*}
	f(z)=&1+i2(1-\alpha)l_u(z)-\frac{2(1-\alpha)(3-2\alpha)}{2!}(l_u(z))^2\\&-i\frac{2(1-\alpha)(3-2\alpha)(4-2\alpha)}{3!}(l_u(z))^3+\ldots.
\end{align*}

Since $F(z)=zf(z)$, we have
\begin{align*}
	F(z)=&z+2i(1-\alpha)l_u(z)z-\frac{2(1-\alpha)(3-2\alpha)}{2!}(l_u(z))^2z\\&-\frac{2i(1-\alpha)(3-2\alpha)(4-2\alpha)}{3!}(l_u(z))^3z+\ldots
\end{align*}

Consequently, we have
\begin{align*}
	\displaystyle
	&\frac{D^3F(0)(z^3)}{3!}=-(1-\alpha)(3-2\alpha)(l_u(z))^2z,\\
	\displaystyle &\frac{D^4F(0)(z^4)}{4!}=-\frac{2i}{3}(1-\alpha)(3-2\alpha)(2-\alpha)(l_u(z))^3z.
\end{align*}
Therefore, using \eqref{Th-1.8} we get
\begin{align*}
	\displaystyle
	&\frac{l_z(D^3F(0)(z^3))}{3!}=-(1-\alpha)(3-2\alpha)(l_u(z))^2\|z\|,\\
	\displaystyle
	&\frac{l_z(D^4F(0)(z^4))}{4!}=-\frac{2i}{3}(1-\alpha)(3-2\alpha)(2-\alpha)(l_u(z))^3\|z\|.
\end{align*}
Taking $z=ru\;(0<r<1)$, we get
\begin{align*}
	\displaystyle
	\frac{l_z(D^3F(0)(z^3))}{3!\|z\|^3}=-(1-\alpha)(3-2\alpha)\;\; \text{and}\quad
	\frac{l_z(D^4F(0)(z^4))}{4!\|z\|^4}=-\frac{2i}{3}(1-\alpha)(3-2\alpha)(2-\alpha).
\end{align*}
Hence
\begin{align*}
	\displaystyle
	\left|\left(\frac{l_{z}(D^4F(0)(z^4))}{4!\|z\|^4}\right)^2-\left(\frac{l_z(D^3F(0)(z^3))}{3!\|z\|^3}\right)^2\right|= \frac{1}{9}(1-\alpha)^2(3-2\alpha)^2(25-16\alpha+4\alpha^2),
\end{align*}
  Figure~\ref{fig:sharpness-rect} shows geometrically why \eqref{Th-C} is sharp and Figure~\ref{fig:extremal-image} shows the extremal mapping \eqref{Sh-1} restricted to the complex line through $u$.
\end{proof}

\begin{figure}[H]
	\centering
	\begin{tikzpicture}
		\begin{axis}[
			width=0.70\textwidth,
			height=0.58\textwidth,
			axis equal image,
			xmin=0, xmax=5.6,
			ymin=0, ymax=5.6,
			xlabel={$|a_3|$},
			ylabel={$|a_4|$},
			%
			axis background/.style={
				fill=blue!2
			},
			grid=major,
			grid style={
				densely dotted,
				gray!35,
				line width=0.5pt
			},
			minor grid style={
				dotted,
				gray!15
			},
			minor tick num=1,
			%
			axis lines=left,
			axis line style={
				black,
				line width=0.9pt,
				-{Stealth[length=2.2mm]}
			},
			tick style={
				black,
				line width=0.7pt
			},
			%
			xlabel style={
				font=\large,
				xshift=4pt
			},
			ylabel style={
				font=\large,
				yshift=4pt
			},
			tick label style={
				font=\small
			},
			%
			clip=false,
			%
			legend style={
				at={(1.03,0.98)},
				anchor=north west,
				font=\small,
				draw=blue!45!black,
				fill=white,
				fill opacity=0.94,
				text opacity=1,
				rounded corners=2pt,
				line width=0.7pt,
				inner xsep=6pt,
				inner ysep=5pt,
				row sep=2pt
			},
			legend cell align=left
			]

			\addplot[
			blue!75!black,
			very thick
			]
			coordinates {
				(0,0) (3,0) (3,4) (0,4) (0,0)
			};
			\addlegendentry{
				$\alpha=0:\quad
				|a_3|\leq3,\ |a_4|\leq4$
			}

			\addplot[
			blue!75!black,
			dashed,
			very thick,
			domain=0:90,
			samples=120
			]
			({5*cos(x)},{5*sin(x)});
			\addlegendentry{
				$|a_3|^2+|a_4|^2=25$
			}

			\addplot[
			red!75!black,
			very thick
			]
			coordinates {
				(0,0) (1,0) (1,1) (0,1) (0,0)
			};
			\addlegendentry{
				$\alpha=\frac12:\quad
				|a_3|\leq1,\ |a_4|\leq1$
			}

			\addplot[
			red!75!black,
			dashed,
			very thick,
			domain=0:90,
			samples=120
			]
			({sqrt(2)*cos(x)},{sqrt(2)*sin(x)});
			\addlegendentry{
				$|a_3|^2+|a_4|^2=2$
			}

			\addplot[
			only marks,
			mark=*,
			mark size=2.6pt,
			black
			]
			coordinates {
				(3,4) (1,1)
			};

			\node[
			fill=white,
			fill opacity=0.92,
			text opacity=1,
			rounded corners=1pt,
			inner sep=2pt,
			font=\scriptsize,
			anchor=south west
			]
			at (axis cs:3,4)
			{$(3,4)$};

			\node[
			fill=white,
			fill opacity=0.92,
			text opacity=1,
			rounded corners=1pt,
			inner sep=2pt,
			font=\scriptsize,
			anchor=south west
			]
			at (axis cs:1,1)
			{$(1,1)$};

			\node[
			fill=white,
			fill opacity=0.90,
			text opacity=1,
			rounded corners=1pt,
			inner sep=2.5pt,
			font=\scriptsize,
			anchor=south east
			]
			at (axis cs:3.95,4.65)
			{$|a_3^2-a_4^2|
				=|a_3|^2+|a_4|^2$};
			
		\end{axis}
	\end{tikzpicture}
	
	\caption{Coefficient-plane illustration of the sharpness in	Theorem~\ref{Th-1}. The bounds
		$|a_3|\le(1-\alpha)(3-2\alpha)$ and
		$|a_4|\le\frac{2}{3}(1-\alpha)(3-2\alpha)(2-\alpha)$
		are shown for $\alpha=0$ and $\alpha=\frac12$.
		}
	\label{fig:sharpness-rect}
\end{figure}

\begin{figure}[H]
	\centering
	\begin{tikzpicture}
		\begin{axis}[
			width=0.70\textwidth,
			height=0.58\textwidth,
			axis equal image,
			xlabel={$\Re F$},
			ylabel={$\Im F$},
			xmin=-3, xmax=3,
			ymin=-4, ymax=1.2,
			axis lines=middle,
			samples=400,
			domain=0:360,
			variable=t,
			%
			axis background/.style={
				fill=blue!2
			},
			grid=major,
			grid style={
				densely dotted,
				gray!35,
				line width=0.5pt
			},
			minor grid style={
				dotted,
				gray!15
			},
			minor tick num=1,
			%
			axis line style={
				black,
				line width=0.9pt,
				-{Stealth[length=2.2mm]}
			},
			tick style={
				black,
				line width=0.7pt
			},
			%
			xlabel style={
				font=\large,
				xshift=4pt
			},
			ylabel style={
				font=\large,
				yshift=4pt
			},
			tick label style={
				font=\small
			},
			%
			clip=false,
			%
			legend style={
				at={(1.03,0.97)},
				anchor=north west,
				font=\small,
				draw=blue!45!black,
				fill=white,
				fill opacity=0.94,
				text opacity=1,
				rounded corners=2pt,
				line width=0.7pt,
				inner xsep=6pt,
				inner ysep=5pt,
				row sep=2pt
			},
			legend cell align=left
			]
			
			\addplot[
			blue!80!black,
			very thick,
			smooth
			]
			({0.6*cos(t-2*atan2(-0.6*cos(t),1+0.6*sin(t)))
				/(1.36+1.2*sin(t))},
			{0.6*sin(t-2*atan2(-0.6*cos(t),1+0.6*sin(t)))
				/(1.36+1.2*sin(t))});
			\addlegendentry{$\alpha=0$}
			
			\addplot[
			red!80!black,
			very thick,
			smooth
			]
			({0.6*cos(t-atan2(-0.6*cos(t),1+0.6*sin(t)))
				/(1.36+1.2*sin(t))^0.5},
			{0.6*sin(t-atan2(-0.6*cos(t),1+0.6*sin(t)))
				/(1.36+1.2*sin(t))^0.5});
			\addlegendentry{$\alpha=\tfrac12$}
			
			\addplot[
			green!50!black,
			very thick,
			smooth
			]
			({0.6*cos(t-0.5*atan2(-0.6*cos(t),1+0.6*sin(t)))
				/(1.36+1.2*sin(t))^0.25},
			{0.6*sin(t-0.5*atan2(-0.6*cos(t),1+0.6*sin(t)))
				/(1.36+1.2*sin(t))^0.25});
			\addlegendentry{$\alpha=\tfrac34$}
			
		\end{axis}
	\end{tikzpicture}
	
	\caption{Image of the circle $|z|=0.6$ under the extremal function
		$F(z)=z(1-iz)^{-2(1-\alpha)}$ of~\eqref{Sh-1}, for three values of $\alpha$.
		The images are starlike curves that become more elongated as $\alpha$ decreases.}
	\label{fig:extremal-image}
\end{figure}
\begin{proof}[\bf Proof of Theorem \ref{Th-2}]
By the given condition, $F\in\mathcal{S}^*_{\alpha}(\mathbb{U}^n)$, where $0\leq \alpha<1$. Therefore
\begin{align}\label{Th-22.1}
	\displaystyle
	\Re\left\lbrace \frac{\|z\|_{\infty}}{l_z\left([D F(z)]^{-1}F(z)\right)}\right\rbrace>\alpha, \quad z\in\mathbb{U}^n\backslash \{0\},\quad l_z\in T(z).
\end{align}

Let $w(z)=(w_1(z),w_2(z),\ldots,w_n(z))'=[D F(z)]^{-1}F(z)$ be a column vector in $\mathbb{C}^n$. According to (\ref{eq3}), we have
\begin{align}\label{Th-22.2}
	\displaystyle
	l_z(w)=\sum\limits_{\{j:||z||_{\infty}=|z_j|\}}t_j\frac{w_j(z)\ol z_j}{||z||_{\infty}},
\end{align}
where each $t_j\geq 0$ and $\sum\limits_{\{j:||z||_{\infty}=|z_j|\}}t_j=1.$
\par Let $k\in\{1,2,\ldots,n\}$ be such that $||z||_{\infty}=|z_k|$. Suppose $t_k=1$ and $t_j=0$, where $j\in\{1,2,\ldots,n\}$ such that $j\neq k$. Then from \eqref{Th-22.1} and \eqref{Th-22.2}, we have
\begin{align}\label{Th-22.3}
	\displaystyle
	\Re\left\lbrace \frac{z_k}{w_k(z)}\right\rbrace>\alpha,\quad z\in\mathbb{U}^n\backslash\{0\},
\end{align}
where $w_k(z)$ is the $k$-th coordinate of $w(z)$ such that $||z||_{\infty}=|z_k|$.

Let $z\in\mathbb{U}^n\backslash\{0\}$ and let $z_0=z/\|z\|$. We define the function $g_k:\mathbb{U}\to \mathbb{C}$ as follows
\begin{align}\label{Th-22.4}	
	\displaystyle
	g_k(\zeta)=\begin{cases}
		\frac{\zeta z_k}{w_k(\zeta z_0)},& \zeta\neq 0,\\[8pt]
		1& \zeta=0.
	\end{cases}
\end{align}

It follows from \eqref{Th-22.3} and \eqref{Th-22.4} that $\Re\{g_k(\zeta)\}>0$ for all $\zeta\in\mathbb{U}$. 
Let
\begin{align}\label{Th-22.5}
	\displaystyle
	h_k(\zeta)=\frac{g_k(\zeta)-\alpha}{1-\alpha}
\end{align}
for all $\zeta\in\mathbb{U}$. Since $\Re\{g_k(\zeta)\}>\alpha$ 
for all $\zeta\in\mathbb{U}$, we deduce from \eqref{Th-22.5} that $h_k\in\mathcal{H}(\mathbb{U})$ with $\Re\{h_k(\zeta)\}>0$ for all $\zeta\in\mathbb{U}$ and $h_k(0)=1$.
On the other hand, from \eqref{Th-21.3}, we have
\begin{align}\label{Th-22.6}
	\displaystyle
	w_k(\zeta z_0)=\zeta z_k\left\lbrack \frac{f(\zeta z_0)}{f(\zeta z_0)+D f(\zeta z_0)\zeta z_0}\right\rbrack.
\end{align}

Using \eqref{Th-22.4} and \eqref{Th-22.6} to \eqref{Th-22.5}, we get
\begin{align}\label{Th-22.7}
	\displaystyle
	\left((1-\alpha)h_k(\zeta)+\alpha\right)f(\zeta z_0)=f(\zeta z_0)+D f(\zeta z_0)\zeta z_0.	
\end{align} 	

Let
\begin{align}\label{Th-22.7a}
	h_k(\zeta)=1+h_{k1}\zeta+h_{k2}\zeta^2+h_{k3}\zeta^3+\ldots
\end{align}

Now using \eqref{Th-21.5}, \eqref{Th-21.6} and \eqref{Th-22.7a} to \eqref{Th-22.7} and then comparing coefficients, we obtain
\begin{align}\label{Th-22.8}
	\begin{cases}
		Df(0)(z_0)&=(1-\alpha)h_{k1},\\[8pt]
		D^2f(0)(z_0^2)&=(1-\alpha)h_{k2}+((1-\alpha)h_{k1})^2,\\[8pt]
		D^3f(0)(z_0^3)&=\left((1-\alpha)h_{k1}\right)^3+3(1-\alpha)^2h_{k1}h_{k2}+2(1-\alpha)h_{k3}.
	\end{cases}
\end{align}

Let $F(z)=(F_1(z),F_2(z),\ldots,F_n(z))$. Since $F(z)=zf(z)$, it follows that
\begin{align*}
	\displaystyle
	\frac{D^mF_k(0)(z^m)}{m!}=\frac{D^{m-1}f(0)(z^{m-1})}{(m-1)!}z_k.
\end{align*}

Since $z_0=z/\|z\|$, we have 
\begin{align}\label{Th-22.11}
	\displaystyle
	\frac{D^mF_k(0)(z_0^m)}{m!}=\frac{D^{m-1}f(0)(z_0^{m-1})}{(m-1)!}\frac{z_k}{||z||_{\infty}}.
\end{align}

In view of \eqref{Th-2x} and \eqref{Th-22.11}, we obtain
\begin{align}\label{T-22.12}
	\displaystyle
	&\left|\frac{1}{3!}D^3F_k(0)\left(z_0^2,\frac{D^3F(0)(z_0^3)}{3!}\right)\frac{||z||_{\infty}}{z_k}-\frac{1}{4!}D^4F_k(0)\left(z_0^3,\frac{D^4F(0)(z_0^4)}{4!}\right)\frac{||z||_{\infty}}{z_k}\right|\nonumber\\=&
	\left|\frac{1}{3!}D^3F_k(0)\left(z_0^2,\frac{D^2f(0)(z_0^2)z_0}{2!}\right)\frac{||z||_{\infty}}{z_k}- \frac{1}{4!}D^4F_k(0)\left(z_0^3,\frac{D^3f(0)(z_0^3)z_0}{3!}\right)\frac{||z||_{\infty}}{z_k}\right|\nonumber\\=&
	\left|\frac{1}{2!} D^2f(0)(z_0^2)\frac{1}{3!}D^3F_k(0)\left(z_0^2,z_0\right)\frac{||z||_{\infty}}{z_k}- \frac{D^3f(0)(z_0^3)}{3!}\frac{1}{4!}D^4F_k(0)(z_0^3,z_0)\frac{||z||_{\infty}}{z_k}\right|
	\nonumber\\=&
	\left| \frac{1}{2!}D^2f(0)(z_0^2)\frac{1}{3!}D^3F_k(0)\left(z_0^3\right)\frac{||z||_{\infty}}{z_k}-\frac{D^3f(0)(z_0^3)}{3!}\frac{1}{4!}D^4F_k(0)(z_0^4)\frac{||z||_{\infty}}{z_k}\right|
	\nonumber\\=&
	\left|\left(\frac{D^2f(0)(z_0^2)}{2!}\right)^2- \left(\frac{D^3f(0)(z_0^3)}{3!}\right)^2\right|.
\end{align}

Using \eqref{Th-22.8} to \eqref{T-22.12}, we get
\begin{align*}
	\displaystyle
	&\left|\frac{1}{3!}D^3F_k(0)\left(z_0^2,\frac{D^3F(0)(z_0^3)}{3!}\right)\frac{||z||_{\infty}}{z_k}-\frac{1}{4!}D^4F_k(0)\left(z_0^3,\frac{D^4F(0)(z_0^4)}{4!}\right)\frac{||z||_{\infty}}{z_k}\right|\\ &\le\frac{(1-\alpha)^2}{4}\left(\frac{1}{9}\left((1-\alpha)^2|h_{k1}|^3+3(1-\alpha)|h_{k1}||h_{k2}|+2|h_{k3}|\right)^2+\left(|h_{k2}|+(1-\alpha)|h_{k1}|^2\right)^2\right).
	\nonumber
\end{align*}

Therefore, by the argument used in the proof of Theorem~\ref{Th-1}, we obtain 
\begin{align*}
	\displaystyle
	&\left\|\frac{1}{3!}D^3F(0)\left(z^2,\frac{D^3F(0)(z^3)}{3!}\right)-\frac{1}{4!}D^4F(0)\left(z^3,\frac{D^4F(0)(z^4)}{4!}\right)\right\|_{\infty}\\\leq&
	\frac{1}{9}(1-\alpha)^2(3-2\alpha)^2(25-16\alpha+4\alpha^2)||z||_{\infty}.
\end{align*}

In particular, when $z_0\in\partial_0 \mathbb{U}^n$, we deduce that
\begin{align}\label{Th-22.13}
	\displaystyle
	&\left|\frac{1}{3!}D^3F_k(0)\left(z_0^2,\frac{D^3F(0)(z_0^3)}{3!}\right)-\frac{1}{4!}D^4F_k(0)\left(z_0^3,\frac{D^4F(0)(z_0^4)}{4!}\right)\right|\\\leq& \frac{1}{9}(1-\alpha)^2(3-2\alpha)^2(25-16\alpha+4\alpha^2),	\quad k=1,2,\ldots,n.\nonumber
\end{align}

We know that
\begin{align*}
	\frac{1}{3!}D^3F_k(0)\left(z^2,\frac{D^3F(0)(z^3)}{3!}\right)-\frac{1}{4!}D^4F_k(0)\left(z^3,\frac{D^4F(0)(z^4)}{4!}\right)\in\mathcal{H}(\ol{\mathbb{U}^n}),
\end{align*}
$k=1,2,\ldots,n$.
By the maximum modulus theorem for holomorphic functions on the unit polydisk, we deduce from \eqref{Th-22.13} that
\begin{align*}
	\displaystyle
	&\left|\frac{1}{3!}D^3F_k(0)\left(z_0^2,\frac{D^3F(0)(z_0^3)}{3!}\right)-\frac{1}{4!}D^4F_k(0)\left(z_0^3,\frac{D^4F(0)(z_0^4)}{4!}\right)\right|\\\leq& 
	\frac{1}{9}(1-\alpha)^2(3-2\alpha)^2(25-16\alpha+4\alpha^2),
\end{align*}
for all $z_0\in\partial \mathbb{U}^n$, where $k=1,2,\ldots,n$.
Hence
\begin{align*}
\displaystyle
&\left|\frac{1}{3!}D^3F_k(0)\left(z_0^2,\frac{D^3F(0)(z_0^3)}{3!}\right)-\frac{1}{4!}D^4F_k(0)\left(z_0^3,\frac{D^4F(0)(z_0^4)}{4!}\right)\right|\\\leq& 
\frac{1}{9}(1-\alpha)^2(3-2\alpha)^2(25-16\alpha+4\alpha^2),
\end{align*}
for all $z\in\mathbb{U}^n$, where $k=1,2,\ldots,n$. Therefore
\begin{align*}
	\displaystyle
	&\left\|\frac{1}{3!}D^3F(0)\left(z^2,\frac{D^3F(0)(z^3)}{3!}\right)-\frac{1}{4!}D^4F(0)\left(z^3,\frac{D^4F(0)(z^4)}{4!}\right)\right\|_{\infty}\\\leq&
	\frac{1}{9}(1-\alpha)^2(3-2\alpha)^2(25-16\alpha+4\alpha^2)||z||_{\infty}, \quad z\in\mathbb{U}^n.
\end{align*}

\medskip
To establish the sharpness, let us consider the function $F$ defined by \begin{align}\label{Sh-2}
	F(z)=zf(z)=\left(\frac{z_1}{(1-iz_1)^{2(1-\alpha)}},\frac{z_2}{(1-iz_1)^{2(1-\alpha)}},\ldots,\frac{z_n}{(1-iz_1)^{2(1-\alpha)}}\right).\end{align}

It is clear that $F\in\mathcal{S}^*_{\alpha}(\mathbb{U}^n)$. 
Note that 
\begin{align*}
	F(z)=z+2i(1-\alpha)z_1z-(1-\alpha)(3-2\alpha)z_1^2z-\frac{2i(1-\alpha)(3-2\alpha)(2-\alpha)}{3}z_1^3z+\cdots,\;z\in\mathbb{U}^n.
\end{align*}
Now \eqref{Th-2x} yields
\begin{align*}
	\displaystyle
	\frac{D^3F(0)(z^3)}{3!}=-(1-\alpha)(3-2\alpha)z_1^2z\ \text{and}\
	\displaystyle \frac{D^4F(0)(z^4)}{4!}=-\frac{2i}{3}(1-\alpha)(3-2\alpha)(2-\alpha)z_1^3z.
\end{align*}
Taking $z=(r,0,\ldots,0)$, where $0<r<1$; one can easily verify that 
\begin{align*}
	\displaystyle
	&\left\|\frac{1}{3!}D^3F(0)\left(z^2,\frac{D^3F(0)(z^3)}{3!}\right)-\frac{1}{4!}D^4F(0)\left(z^3,\frac{D^4F(0)(z^4)}{4!}\right)\right\|_{\infty}\\=&
	\frac{1}{9}(1-\alpha)^2(3-2\alpha)^2(25-16\alpha+4\alpha^2)||z||_{\infty}, \quad z\in\mathbb{U}^n.
\end{align*}
which establishes the sharpness of the bound.
\end{proof}

\begin{proof}[\bf Proof of Theorem \ref{Th-3}]
Proceeding as in the proof of Theorem~\ref{Th-1}, we arrive at
\eqref{Th-21.9}--\eqref{Th-21.11}. It well known that
\begin{align}\label{eqn-2.37}
\displaystyle
&\bigg|\left(\frac{l_{z}(D^2F(0)(z^2))}{2!\|z\|^2}-\frac{l_z(D^4F(0)(z^4))}{4!\|z\|^4}\right)\bigg(\left(\frac{l_{z}(D^2F(0)(z^2))}{2!\|z\|^2}\right)^2\nonumber\\&+\frac{l_{z}(D^2F(0)(z^2))}{2!\|z\|^2}\frac{l_{z}(D^4F(0)(z^4))}{4!\|z\|^4}-2\left(\frac{l_{z}(D^3F(0)(z^3))}{3!\|z\|^3}\right)^2\bigg)\bigg|\nonumber\\=&\left|\frac{l_{z}(D^2F(0)(z^2))}{2!\|z\|^2}-\frac{l_z(D^4F(0)(z^4))}{4!\|z\|^4}\right|\bigg|\left(\frac{l_{z}(D^2F(0)(z^2))}{2!\|z\|^2}\right)^2\nonumber\\&+\frac{l_{z}(D^2F(0)(z^2))}{2!\|z\|^2}\frac{l_{z}(D^4F(0)(z^4))}{4!\|z\|^4}-2\left(\frac{l_{z}(D^3F(0)(z^3))}{3!\|z\|^3}\right)^2\bigg|=\mathcal{T}_1\mathcal{T}_2,
\end{align}
where \begin{align}
	\displaystyle\label{m-1}
	&\mathcal{T}_1=\left|\frac{l_{z}(D^2F(0)(z^2))}{2!\|z\|^2}-\frac{l_z(D^4F(0)(z^4))}{4!\|z\|^4}\right|,\\
	\displaystyle\label{m-2}
	&\mathcal{T}_2=\left|\left(\frac{l_{z}(D^2F(0)(z^2))}{2!\|z\|^2}\right)^2+\frac{l_{z}(D^2F(0)(z^2))}{2!\|z\|^2}\frac{l_{z}(D^4F(0)(z^4))}{4!\|z\|^4}-2\left(\frac{l_{z}(D^3F(0)(z^3))}{3!\|z\|^3}\right)^2\right|
	\end{align} 
\medskip

{\bf{A.}} Using \eqref{Th-21.9} and \eqref{Th-21.11} to \eqref{m-1}, we obtain
\begin{align}\label{eqn-2.38}
&\mathcal{T}_1\le\left|\frac{l_{z}(D^2F(0)(z^2))}{2!\|z\|^2}\right|+\left|\frac{l_z(D^4F(0)(z^4))}{4!\|z\|^4}\right|\nonumber\\&\le(1-\alpha)\left(|h_1|+\frac{1}{6}\left((1-\alpha)^2|h_1|^3+3(1-\alpha)|h_1||h_2|+2|h_3|\right)\right).
\end{align}

In view of Lemma \ref{Lem1} we get
\begin{align}\label{eqn-2.39}
\mathcal{T}_1\le\frac{2}{3}(1-\alpha)(9-7\alpha+2\alpha^2).	
\end{align}

{\bf{B.}} Using \eqref{Th-21.9}--\eqref{Th-21.11} to \eqref{m-2} we have
\begin{align}\label{eqn-2.40}
\mathcal{T}_2&=\frac{(1-\alpha)^2}{6}\left|6h_1^2-3h_2^2+2h_1h_3-3(1-\alpha)h_1^2h_2-2(1-\alpha)^2h_1^4\right|\nonumber\\&\le\frac{(1-\alpha)^2}{6}\left(6|h_1|^2+3|h_2|^2+2(1-\alpha)^2|h_1|^4+2|h_1|\left|h_3-\frac{3}{2}(1-\alpha)h_1h_2\right|\right).
\end{align}
	
Since $0\le\alpha\le\frac{2}{3}$. Using Lemma \ref{Lem1} to \eqref{eqn-2.40}, we obtain 
\begin{align}\label{eqn-2.41}
\mathcal{T}_2\le\frac{2}{3}(1-\alpha)^2\left(9+8(1-\alpha)^2+2(2-3\alpha)\right)=\frac{2}{3}(1-\alpha)^2\left(21-22\alpha+8\alpha^2\right).
\end{align}

Therefore, using \eqref{eqn-2.39} and \eqref{eqn-2.41} to \eqref{eqn-2.37} we obtain the desired bound. To established sharpness we consider the function $F(z)$ defined by \eqref{Sh-1}. Figure~\ref{fig:T3-factors} illustrates how the bound $B_2(\alpha)$ of Theorem~\ref{Th-3} arises as a product of the two factor bounds $\mathcal T_1(\alpha)$ and $\mathcal T_2(\alpha)$ established in \eqref{eqn-2.39} and \eqref{eqn-2.41}.
\end{proof}	
\begin{figure}[htbp]
	\centering
	\begin{tikzpicture}
		\begin{axis}[
			width=0.82\textwidth,
			height=0.46\textwidth,
			xmin=0, xmax=0.6667,
			ymin=0, ymax=15,
			xlabel={$\alpha$},
			ylabel={Factor value},
			axis lines=left,
			grid=major,
			grid style={dashed, opacity=0.35},
			samples=200,
			domain=0:0.6667,
			legend style={
				at={(0.98,0.97)},
				anchor=north east,
				draw=black,
				fill=white,
				rounded corners=1pt,
				font=\small
			}
			]
			\addplot[very thick, blue]
			{ (2/3)*(1-x)*(9-7*x+2*x^2) };
			\addlegendentry{$\mathcal T_1(\alpha)=\frac23(1-\alpha)(9-7\alpha+2\alpha^2)$}
			
			\addplot[very thick, red]
			{ (2/3)*(1-x)^2*(21-22*x+8*x^2) };
			\addlegendentry{$\mathcal T_2(\alpha)=\frac23(1-\alpha)^2(21-22\alpha+8\alpha^2)$}
			
			\addplot[very thick, dashed, black, domain=0:0.6667]
			{ (2/3)*(1-x)*(9-7*x+2*x^2) * (2/3)*(1-x)^2*(21-22*x+8*x^2) };
			\addlegendentry{$B_2(\alpha)=\mathcal T_1(\alpha)\mathcal T_2(\alpha)$}
			
			\addplot[only marks, mark=*, mark size=2.2pt] coordinates {(0,6) (0,14)};
		\end{axis}
	\end{tikzpicture}
	\caption{Decomposition of the sharp bound $B_2(\alpha)$ of Theorem~\ref{Th-3}
		into its two factors $\mathcal T_1(\alpha)$ and $\mathcal T_2(\alpha)$
		(cf.\ \eqref{eqn-2.39}, \eqref{eqn-2.41}), valid for $0\le\alpha\le\frac23$.
		At $\alpha=0$, $\mathcal T_1(0)=6$ and $\mathcal T_2(0)=14$, whose product
		recovers the classical constant $84$ of Ali et al.\ \cite{Ali et al.}.}
	\label{fig:T3-factors}
\end{figure}
\begin{proof}[\bf Proof of Theorem \ref{Th-4}]
	The reduction used in the proof of Theorem~\ref{Th-2}
	applies here without change. In particular, the coefficient identities 	in \eqref{Th-22.8} remain valid. Using \eqref{Th-2x} and	\eqref{Th-22.11}, we obtain
	\eqref{Th-22.8}. Therefore, we have 
	
	\begin{align}\label{T-42.12}
		\displaystyle
		&\bigg|\bigg(\frac{1}{2!}D^2F_k(0)\left(z_0,\frac{D^2F(0)(z_0^2)}{2!}\right)\frac{||z||_{\infty}}{z_k}-\frac{1}{4!}D^4F_k(0)\left(z_0^3,\frac{D^2F(0)(z_0^2)}{2!}\right)\frac{||z||_{\infty}}{z_k}\nonumber\\&+\frac{2}{3!}D^3F_k(0)\left(z_0^2,\frac{D^3F(0)(z_0^3)}{3!}\right)\frac{||z||_{\infty}}{z_k}\bigg)\left(\frac{1}{2!}D^2F_k(0)(z_0,z_0)\frac{||z||_{\infty}}{z_k}-\frac{1}{4!}D^4F_k(0)(z_0,z_0^3)\frac{||z||_{\infty}}{z_k}\right)\bigg|\nonumber\\=&
		\bigg|\bigg(\frac{1}{2!}D^2F_k(0)\left(z_0,{Df(0)(z_0)z_0}\right)\frac{||z||_{\infty}}{z_k}-\frac{1}{4!}D^4F_k(0)\left(z_0^3,{Df(0)(z_0)z_0}\right)\frac{||z||_{\infty}}{z_k}\nonumber\\&+\frac{2}{3!}D^3F_k(0)\left(z_0^2,\frac{D^2f(0)(z_0^2)z_0}{2!}\right)\frac{||z||_{\infty}}{z_k}\bigg)\left(\frac{1}{2!}D^2F_k(0)(z_0^2)\frac{||z||_{\infty}}{z_k}-\frac{1}{4!}D^4F_k(0)(z_0^4)\frac{||z||_{\infty}}{z_k}\right)\bigg|\nonumber\\=&
		\bigg|\bigg({Df(0)(z_0)}\frac{1}{2!}D^2F_k(0)\left(z_0^2\right)\frac{||z||_{\infty}}{z_k}-{Df(0)(z_0)}\frac{1}{4!}D^4F_k(0)\left(z_0^4\right)\frac{||z||_{\infty}}{z_k}\nonumber\\&+2\left(\frac{D^2f(0)(z_0^2)}{2!}\right)\frac{1}{3!}D^3F_k(0)\left(z_0^3\right)\frac{||z||_{\infty}}{z_k}\bigg)\left(Df(0)(z_0)-\frac{1}{3!}D^3f(0)(z_0^3)\right)\bigg|\nonumber\\=&
			\bigg|\bigg({\left(Df(0)(z_0)\right)^2}-{Df(0)(z_0)}\frac{D^3f(0)\left(z_0^3\right)}{3!}+2\left(\frac{D^2f(0)(z_0^2)}{2!}\right)^2\bigg)\left(Df(0)(z_0)-\frac{1}{3!}D^3f(0)(z_0^3)\right)\bigg|\nonumber\\&=\hat{\mathcal{T}_1}\hat{\mathcal{T}_2},
	\end{align}
	where,
	\begin{align}
		\displaystyle\label{m-3}
	&\hat{\mathcal{T}_1}=\left|Df(0)(z_0)-\frac{1}{3!}D^3f(0)(z_0^3)\right|\\
	\displaystyle\label{m-4}
	&\hat{\mathcal{T}_2}=\left|{\left(Df(0)(z_0)\right)^2}-{Df(0)(z_0)}\frac{D^3f(0)\left(z_0^3\right)}{3!}+2\left(\frac{D^2f(0)(z_0^2)}{2!}\right)^2\right|.
	\end{align}

   {\bf{A.}} Using \eqref{Th-22.8} to \eqref{m-3}, we get
	\begin{align}\label{m-5}
		\displaystyle
		&\hat{\mathcal{T}_1}\le\left|Df(0)(z_0)\right|+\left|\frac{1}{3!}D^3f(0)(z_0^3)\right|\nonumber\\&\le(1-\alpha)\left(|h_{k1}|+\frac{1}{6}\left((1-\alpha)^2|h_{k1}|^3+3(1-\alpha)|h_{k1}||h_{k2}|+2|h_{k3}|\right)\right).
	\end{align}
	Thus, by Lemma \ref{Lem1} we get
	\begin{align*}
		\hat{\mathcal{T}_1}\le\frac{2}{3}(1-\alpha)(9-7\alpha+2\alpha^2).
	\end{align*}
	Therefore, by a similar proof as we did in Theorem \ref{Th-2},	we obtain the desired bound.
	\begin{align}\label{m-7}
		\displaystyle
		\left\|\frac{1}{2!}D^2F(0)(z,z)-\frac{1}{4!}D^4F(0)(z,z^3)\right\|_{\infty}\leq
		\frac{2}{3}(1-\alpha)(9-7\alpha+2\alpha^2)||z||_{\infty}.
	\end{align}
	{\bf{B.}} Using \eqref{Th-22.8} and \eqref{m-4}, we obtain
	\begin{align}\label{m-6}
		&\hat{\mathcal{T}_2}=\bigg|{\left(Df(0)(z_0)\right)^2}-{Df(0)(z_0)}\frac{D^3f(0)\left(z_0^3\right)}{3!}+2\left(\frac{D^2f(0)(z_0^2)}{2!}\right)^2\bigg|\nonumber\\&=\frac{(1-\alpha)^2}{6}\left|6h_{k1}^2-3h_{k2}^2+2h_{k1}h_{k3}-3(1-\alpha)h_{k1}^2h_{k2}-2(1-\alpha)^2h_{k1}^4\right|\nonumber\\&\le\frac{(1-\alpha)^2}{6}\left(6|h_{k1}|^2+3|h_{k2}|^2+2(1-\alpha)^2|h_{k1}|^4+2|h_{k1}|\left|h_{k3}-\frac{3}{2}(1-\alpha)h_{k1}h_{k2}\right|\right).
	\end{align}
	Since $0\le\alpha\le\frac{2}{3}$. Using the Lemma \ref{Lem1} to \eqref{m-6}, we have
	\begin{align*}
		\hat{\mathcal{T}_2}\le\frac{2}{3}(1-\alpha)^2(21-22\alpha+8\alpha^2).
	\end{align*}
	Hence, similar argument used in the proof of Theorem \ref{Th-2}, we obtain the desired bound
	\begin{align}\label{m-8}
		\displaystyle
		&\bigg\|\frac{1}{2!}D^2F(0)\left(z,\frac{D^2F(0)(z^2)}{2!}\right)-\frac{1}{4!}D^4F(0)\left(z^3,\frac{D^2F(0)(z^2)}{2!}\right)\nonumber\\&+\frac{2}{3!}D^3F(0)\left(z^2,\frac{D^3F(0)(z^3)}{3!}\right)\bigg\|_{\infty}\leq
		\frac{2}{3}(1-\alpha)^2(21-22\alpha+8\alpha^2)||z||_{\infty}.
	\end{align}
	Therefore, from \eqref{m-7} and \eqref{m-8}, we get the desired bound. To prove the sharpness we consider the function $F(z)$ defined by \eqref{Sh-2}.
\end{proof}

\vspace{5mm}

\noindent\textbf{Conflict of interest:} The authors declare that there is no conflict  of interest regarding the publication of this paper.\vspace{1.2mm}

\noindent {\bf Funding:} Not Applicable.\vspace{1.2mm}

\noindent\textbf{Data availability statement:}  Data sharing not applicable to this article as no datasets were generated or analysed during the current study.\vspace{1.2mm}

\noindent {\bf Authors' contributions:} All the authors have equal contributions in preparation of the manuscript.

\end{document}